\documentclass{article}
\usepackage{graphicx} 

\usepackage{latexsym,amsmath,amsfonts,amssymb,amstext,theorem,euscript,array,enumerate,mathrsfs}
\usepackage{color}
\usepackage{comment}
 
\newtheorem{Theorem}{Theorem}[section]

\newtheorem{Proposition}{Proposition}[section]
\newtheorem{Assumption}{Assumption}[section]
\newtheorem{Lemma}{Lemma}[section]

\newtheorem{Remark}{Remark}[section]
\newtheorem{Example}{Example}[section]
\numberwithin{equation}{section}

\def\esssup_#1{\underset{#1}{\mathrm{ess\,sup\, }}}
\def\essinf_#1{\underset{#1}{\mathrm{ess\,inf\, }}}

\def\qed{{\hfill\hbox{\enspace${ \square}$}} \smallskip}
\def\sqr#1#2{{\vcenter{\vbox{\hrule height .#2pt \hbox{\vrule
 width .#2pt height#1pt \kern#1pt \vrule
width .#2pt} \hrule height .#2pt}}}}
\def\square{\mathchoice\sqr54\sqr54\sqr{4.1}3\sqr{3.5}3}

\def\ds{\begin{displaystyle}}
\def\eds{\end{displaystyle}}

\def\<{\langle }
\def\>{\rangle }

\def \R{\mathbb{R}}

\def \E{\mathbb{E}}
\def \F{\mathbb{F}}

\def \P{\mathbb{P}}

\def \H{\mathbb{H}}

\def\cala{{\cal A}}
\def\calb{{\cal B}}

\def\calf{{\cal F}}

\def\calh{{\cal H}}

\def\calp{{\cal P}}

\def\call{{\cal L}}
\def\cals{{\cal S}}

\def\beqs{\begin{eqnarray*}}
\def\enqs{\end{eqnarray*}}
\def\beq{\begin{eqnarray}}
\def\enq{\end{eqnarray}}

\allowdisplaybreaks

\title{Partially observed controlled Markov chains \\and optimal control of the Wonham filter}

\author{Fulvia Confortola\footnote{Dipartimento di Matematica, Politecnico di Milano, {\sf fulvia.confortola at polimi.it} This author is a member  of INdAM-GNAMPA.}
\and Marco Fuhrman \footnote{Dipartimento di Matematica, Università degli Studi di Milano, {\sf marco.fuhrman at unimi.it} This author is a member  of INdAM-GNAMPA.} 
}

\date{}

\begin{document}

\maketitle

\begin{abstract}
We consider a class of optimal control problems, with finite or infinite horizon, for a continuous-time Markov chain with finite state space. In this case, the control process affects the transition rates. We suppose that the controlled process can not be observed, and at any time the  control actions are chosen based on the observation of a related stochastic process perturbed by an exogenous Brownian motion. We describe a construction of the controlled Markov chain, having stochastic transition rates adapted to the observation filtration.  By a change of probability measure of Girsanov type,  we introduce the so-called separated optimal control problem, where the  state is the conditional (unnormalized) distribution of the controlled Markov chain and the observation process becomes a driving Brownian motion, and we prove the equivalence with the original control problem. The controlled equations for the  separated problem are an instance of the Wonham filtering equations.   Next we present an analysis of the separated problem: we characterize the value function as the unique viscosity solution to the dynamic programming equations  (both in the parabolic and the elliptic case) we prove verifications theorems and a version of the stochastic maximum principle in the form of a necessary conditions for optimality.
\end{abstract}


\vspace{5mm}

\noindent {\bf MSC Classification}: 60H30;  60J27;  93E11; 93E20; 49L25.

\vspace{5mm}

\noindent {\bf Key words}: optimal control with partial observation; controlled hidden Markov models; Wonham filter;  Bellman's equation; viscosity solutions; stochastic maximum principle.

\section{Introduction}

This paper is devoted to the study of optimal control problems for controlled Markov chains  with partial observation. Except for some initial general constructions, we will consider  controlled Markov processes $(X^\alpha_t)_{t\ge0}$ which are time-continuous  and with values in a finite state space $S$.
The controlled process depends on a control process $(\alpha_t)$, with values in a general action space $A$, which is chosen in order to maximize a reward functional of the form
$$
J(\alpha)=\bar\E\left[ \int_0^T 
f(X^\alpha_t,\alpha_t)\,dt   + 
g(X^\alpha_T)
\right],\qquad
\text{\rm or }
\qquad
J(\alpha)=\bar\E\left[ \int_0^\infty
e^{-\beta t}f(X^\alpha_t,\alpha_t)\,dt\right],
$$
for the finite and
infinite horizon cases, 
where $f$, $g$ are given real functions and
 $\beta >0$ is a discount factor (below we also consider  some slightly 
 more general reward functionals).
Here $\bar \E$ denotes the expectation with respect to some probability $\bar\P$, called the ``physical'' probability to distinguish it from the reference probability $\P$ introduced below.
 
We consider the case of partial observation, namely when the state is not directly observable and the choice of the control $\alpha_t$ at any time $t$ is  based on the observation of the past values of another related process, denoted $(W_t)_{t\ge0}$. In the literature the related terminology Hidden (or Latent) Markov Model  is also used.  Thus, the control process $(\alpha_t)$ will be required to be $(\calf^W_t)$-predictable, where $(\calf^W_t)$ is the $\sigma$-algebra  generated by $(W_t)$.
In our model we assume that the observation process $W$ takes values in $\R^d$ and has the form
\begin{equation}
    \label{obsintro}
W_t=\int_0^th(X_s^\alpha,\alpha_s)\,ds+B_t
\end{equation}
where $h:S\times A\to \R^d$ is a given function and $(B_t)_{t\ge0}$ is a Brownian motion in $\R^d$. Among many possible variations, this model - controlled Markov chain with observation corrupted by Brownian noise - is often deemed to be of basic importance. 

The main route to the solution of the optimal control problem - that we also adopt in this paper - consists in reducing it to a different problem with complete observation (sometimes called the separated problem) where the controlled state process is given by the so-called filter process, whose values at time $t$ are conditional distributions of the unobserved process $X_t^\alpha$ given $\calf^W_t$. For our model, in the uncontrolled case, explicit recursive equations for the filter were obtained in \cite{Wo1965} and their solutions are called the Wonham filter.

There is a huge literature on partially observed control problems and 
we  refer the reader to the monographs \cite{Bebook}, \cite{nisio2015} and \cite{ElAgMobook} which include expositions of the required technical prerequisites and  contain 
extensive references. 
The books \cite{Bebook} and \cite{nisio2015} mainly consider the case when the controlled process is defined as the solution to a controlled stochastic differential equation in Euclidean space driven by a Brownian motion. The treatise \cite{ElAgMobook} presents a large number of hidden Markov models with many variations with respect to our case, for instance discrete-time problems, continuous state spaces, different observation models and so on. In the sequel we will also refer to \cite{BaCribook}
and \cite{Brbook}, dealing with technical aspects on stochastic filtering theory and optimal control of marked point processes.

The analysis of our model is of course made easier by the assumption that the state space $S$ is finite, but it turns out that  a direct application of  general existing theories does not yield  satisfactory results, as it requires unnecessary assumptions or it does not give sharp conclusions. It is the purpose of this paper  to present a rather complete analysis of the model sketched above, with various methodologies (stochastic maximum principle and dynamic programming, including analysis of the Hamilton-Jacobi-Bellman equation), encompassing the finite and infinite horizon case and with a careful formulation of the optimization problem.
Except for some natural boundedness or continuity assumptions on the coefficients (the functions $f$, $g$, $h$ introduced above, as well as the controlled transition rates presented below) we try to be as general as possible.

In order to explain more carefully our contributions we have to enter some technical details while we describe the plan of the paper at the same time. The first issue concerns the construction of a controlled Markov chain. In this case the transition rate from state $i\in S$ to state $j\neq i$, denoted $q(a,i,j)$, depends on the choice of the control parameter $a\in A$. Given the functions 
$q(a,i,j)$ and
an $\F^W$-predictable control process $(\alpha_t)$ the aim is to construct a process $(X^\alpha_t)$ admitting stochastic transition rates $q(\alpha_t,i,j)$. The precise meaning of this, according to most of the literature,  is that 
 the random measure $q(\alpha_t,X^\alpha_{t-},j)dt$ is the compensator of the process 
$N_t(j)$ which counts the number of jumps of $(X^\alpha_t)$ to the state $j$ in the time interval $[0,t]$, namely
$$
N_t(j)-\int_0^t q(\alpha_s,X^\alpha_{s-},j)ds
\quad \text{is a martingale}
$$
 with respect to the filtration generated by $(W_t)$ and by the controlled process itself. 
When there is no observation process and the only filtration is the natural one, the existence of the controlled process may be deduced from a general result on a martingale problem for marked point process: see \cite{Ja1974}. In this case the controlled process is defined in a weak sense, as a law on a canonical space. In the general case with observation, when the state $S$ is finite, one can write down stochastic differential equations for a pure jump process identifying $S$ with a finite subset of $\R^N$: see \cite{ElAgMobook} chapter 12. In the present paper we revert to a different construction which is inspired by the Grigelionis theorem (see e.g. \cite{Brbook} section 5.7). It admits several variants: see for instance Section 3 of \cite{BrMa96} for related results and references.
We construct the controlled process in strong formulation, starting from an auxiliary Poisson process on an extended space and then taking an appropriate projection (depending on the control process) on $(0,\infty)\times S$ of the corresponding random measure. This direct construction for a controlled pure jump process  has the advantage that it can be extended to general state space $S$.  Section 
 \ref{seccontruction} is devoted to the exposition of this result in its general form.

In the following sections we apply the previous construction and we formulate the optimal control problem. In order to introduce the separated control problem for the Wonham filter one needs to perform a change of probability of Girsanov type: given the martingale
$$
(Z^\alpha_t)^{-1}=\exp\left(-\int_0^t h(X^\alpha_s,\alpha_s)\,dB_s-\frac{1}{2} \int_0^t |h(X^\alpha_s,\alpha_s)|^2\,ds\right),
\qquad t\ge 0,
$$
(this involved notation is consistent with the following sections) 
one defines the so-called reference probability $\ \P$ setting $d\P(d\omega)= Z^\alpha_T(\omega)d\bar\P(d\omega)$
and the filter process of
 the unnormalized conditional laws
$$
\rho_t^i =\E[1_{X_t^\alpha=i}\,Z^\alpha_t\,|\, \calf_t^W], \qquad t\ge 0,i\in S.
$$
By the Girsanov theorem the observation $W$ is a Brownian motion on $[0,T]$ under $ \P$ and
it is well known (see e.g. \cite{BaCribook}) that the processes $(\rho_t^i)$ solve the 
Zakai filtering equations, that are called the Wonham filtering equations in this particular situation:
\begin{equation}
    \label{sdewonham}
d\rho_t^i= \sum_{j\in S} \rho_t^j q(\alpha_t,j,i)\,dt
+ \rho_t^i\,h(i,\alpha_t)\,dW_t,
\qquad i\in S.
\end{equation}
The reward functional   takes the form
\begin{equation}
    \label{rewardwonham}
J(\alpha)=\E\left[ \int_0^T  \sum_{i\in S}
\rho_t^i\,f(i,\alpha_t)\,dt  + \sum_{i\in S}\rho_T^i\,g(i)
\right],\qquad
\text{\rm or }
\qquad
J(\alpha)=\E\left[ \int_0^\infty e^{-\beta t}
\sum_{i\in S}
\rho_t^i\,f(i,\alpha_t)\,dt\right].
\end{equation}
This way we obtain 
the separated control problem, where the new state equation is now \eqref{sdewonham},  driven by the observation Brownian motion $(W_t)$, so that the new control problem is fully observed. As it is customary (see e.g. \cite{Bebook}) it is more convenient to formulate the entire setting under the reference probability $\P$ from the beginning and to perform the inverse Girsanov transformation to construct the physical probability $\bar\P$: this way one obtains a weak formulation of the original control problem under $\bar\P$.
Section \ref{secformulazionepb} is devoted to the presentation of this standard material, and it also contains some preliminary properties of the corresponding value function.

In the following sections we address the optimal control problem for the state equation 
\eqref{sdewonham} and the reward
\eqref{rewardwonham}. The controlled state $\rho_t=(\rho_t^i)_{i\in S}$ evolves in the state space 
$$
 D=\{x=(x_1,\ldots,x_N)\in \R^N\,:\, x_i\ge0,\, i=1,\ldots,N\}.
 $$
We first consider the dynamic programming approach. We introduce the value functions $v(t,x)$ or $v(x)$ for the finite or infinite horizon cases, where $x\in D$ denotes the starting state. The value functions are related  to the Hamilton-Jacobi-Bellman (HJB) equations, which are, respectively, parabolic and elliptic equations on $D$.
For instance, in  the elliptic case 
 for a function
 $v(x)=v(x_1,\ldots,x_N)$ this is: 
\begin{equation}
    \label{hjbintro} 
\beta v(x)   -\sup_{a\in A}\bigg[
\frac{1}{2}\sum_{ij}\partial_{ij}^2v(x )\,
x_ix_j\sum_{k=1}^dh_k(i,a)h_k(j,a)
+\sum_{ij} \partial_iv(x)x_jq(a,j,i)
+ 
\sum_i
x_i\,f(i,a)  
\bigg]     =0.
\end{equation}
In the general case this equation is fully nonlinear and it is not uniformly elliptic, so the convenient notion of solution is the concept of viscosity solution, see e.g. \cite{CrIsLi1992}. While proving that the value function is a viscosity solution follows from standard results,  uniqueness of solutions is more delicate and is usually proved via comparison results between sub- and super-solutions to the equation. While there exists very sophisticated version of this kind of results for more general cases, for instance even when $D$ is replaced by a Hilbert space (see \cite{li1989}, \cite{GoSw2000} or \cite{FaGoSwbook}) we are not aware of any result which can be applied to \eqref{hjbintro}
or to its parabolic version, under our assumptions. Therefore we present two comparison theorems in Sections
\ref{sechjbellitt} and
\ref{sechjbparab},
thus establishing uniqueness of viscosity solution and concluding that the value functions are completely characterized analytically as solutions to suitable PDEs. In Section \ref{sec:verif} we prove two verification theorems, for the finite and infinite time horizon, showing that if a classical solution to the dynamic programming equation exists then, under some additional conditions, it coincides with the value function and it is possible to construct an optimal control in feedback form for the separated problem. In our context of a controlled finite Markov chain it may happen that the HJB equation is uniformly elliptic and analytical results on the existence of smooth solutions apply: see Remark \ref{equnifellitt}.

Section 
\ref{secSMP} is devoted to the approach to the control problem
\eqref{sdewonham}-\eqref{rewardwonham}
by means of the stochastic maximum principle.
This is a basic tool in stochastic optimization and as such it has been applied to partially observed optimal control problems as well. The reader may find an exposition and further references in \cite{Bebook} or \cite{ElAgMobook}. We formulate a stochastic maximum principle for the separated problem as a necessary condition for optimality related to our optimization problem. Although the proof relies on classical arguments, the final statement  improves existing results in the literature. 
Indeed,   the maximum principle for the controlled Zakai equation is usually formulated under the assumptions that the set of control a actions $A$ is convex and the coefficients are differentiable with respect to $a\in A$. These restrictions, for completely observable control problems, have been removed by Peng \cite{Pe1990} and we follow the same approach here. In spite of the greater generality, the final formulation does not require the second adjoint equation introduced in \cite{Pe1990}, since simplifications occur due to the linearity of the separated control problem with respect to the state variable. In any case, the restrictions mentioned above can be avoided and in particular the coefficients are only assumed to be continuous with respect to $a\in A$: see Theorem \ref{thSMP} below.

In conclusion, our contribution consists in the construction of a controlled Markov chain with stochastic transition rates adapted to a general given filtration (in particular, a Brownian filtration); the formulation of a separated optimal control problem for the Wonham filter and the proof of its equivalence with the original one; a complete and largely self-contained analysis of the separated problem, both for the finite and infinite horizon case, including a characterization of the value function as the unique viscosity solution to the dynamic programming equations, a verification theorem, an instance of the stochastic maximum principle in the form of a necessary condition for optimality.

\section{A construction of a point process with random compensator}
\label{seccontruction}

In this section we suppose that $S$ is a Polish space with a Borel probability measure $\mu$. We assume we are given a nonnegative function $q(\omega,t,x,y)$ with suitable properties (in particular, bounded) and we show how to construct an $S$-valued pure jump process $(X^q_t)$ such that the corresponding random measure admits compensator $q(t,X^q_{t-},y)\,\mu(dy)\,dt$. 
We refer e.g. to \cite{Brbook} for prerequisites on random measures and point processes.
In the following sections 
this construction will be applied to define a controlled Markov chain in $S$.
Our setting is summarized in the following hypotheses. 
\begin{Assumption}\label{settingprobabgen}
Assume that 
on a probability space $(\Omega,\calf,\P)$ the following independent random elements are defined:
\begin{enumerate}
    
    \item a Poisson process $(T_n)_{n\ge 1}$ on $(0,\infty)$ with intensity $K>0$; we set $T_0=0$;
    \item an independent sequence $(X_n)_{n\ge 1}$ of random variables, taking values in a  Polish space $S$, each with the same law $\mu$;
    \item  an   $S$-valued  random variable $X_0$;
    \item an independent sequence $(U_n)_{n\ge 1}$ of random variables, each uniformly distributed on $(0,1)$.
\end{enumerate}
\end{Assumption}

We define a random measure $\bar N(dt,dy,du)$ on $(0,\infty)\times S\times (0,1)$ by the formula
$$
\bar N(dt,dy,du)=\sum_{n\ge 1} \delta_{(T_n,X_n,U_n)}(dt,dy,du)
$$
and we denote $\F^{\bar N}=(\calf^{\bar N}_t)_{t\ge0}$ the filtration generated by $\bar N$ and $X_0$.
We note that $\bar N$ is a marked Poisson processes, with independent marks $(X_n,U_n)$ taking values in $S\times (0,1)$. Therefore the  $\F^{\bar N}$-compensator of $\bar N$ is
$$
\bar \nu(dt,dy,du)=K\, dt\, \mu(dy)\,du.
$$

Now  suppose that we are given a filtration $\F^1=(\calf^1_t)_{t\ge0}$   in $(\Omega,\calf)$, with $\calf^1_\infty$ independent from the above random processes and variables. Denote $\F^{\bar N,1}=(\calf^{\bar N,1}_t)_{t\ge0}$ the filtration defined by $ \calf^{\bar N,1}_t= \calf^{\bar N}_t\vee \calf^1_t$.
Since $\calf^1_\infty$ is independent of $\bar N$, it is easily verified that $\bar\nu$ is also the compensator of $\bar N$ with respect to  $\F^{\bar N,1}$.

Also  suppose that   we are given a function $q:\Omega\times[0,\infty)\times S\times S\to \R$
 satisfying $\P$-a.s.
\begin{equation}
    \label{qbdd}
0\le q(\omega,t,x,y)\le C_q,
\qquad t\ge 0, \;x,y\in S
\end{equation}
for some constant $C_q>0$.
We assume that $q$ is $\calp(\F^{1})\otimes \calb(S)\otimes \calb(S)$-measurable, where $\calp(\F^{1})$ denotes the predictable $\sigma$-algebra in $\Omega\times [0,\infty)$ for the filtration $\F^{1}$ and $\calb(S)$ the Borel $\sigma$-algebra in $S$.
Finally we
assume that the constant in Assumption \ref{settingprobabgen}-${\it 1}$ satisfies $K\ge C_q$.

Define inductively $\nu_0=0$ and, for $k\ge 0$,
$$
\nu_{k+1}=\inf\{ n>\nu_k \,:\,
U_n< q(T_n,X_{\nu_k},X_n)/K\},
$$
with the convention $\inf \emptyset =\infty$.
We take an element $\delta\notin S$ and we add it to $S$ as an isolated point. We set 
$T_{\nu_n}=\infty$ and $X_{\nu_n}=\delta$ if ${\nu_n}=\infty$ and
we 
consider the marked point process  $(T_{\nu_n},X_{\nu_n})_{n\ge 1}$. We also introduce  the corresponding
$S\cup\{\delta\}$-valued piecewise-constant process $(X^q_t)_{t\ge 0}$ (starting from $X_0$ at time $0
$) and the associated random measure $ N(dt,dy)$ on $(0,\infty)\times S$: 
$$\hbox{for } n\ge0,\quad
X^q_t= X_{\nu_n},\;
T_{\nu_n}\le t< T_{\nu_{n+1}};\qquad
  N(dt,dy )=\sum_{n\ge 1} \delta_{(T_{\nu_n},X_{\nu_n})}(dt,dy)\, 1_{T_{\nu_n}<\infty}.
$$
We denote $\F^{ N}=(\calf^{ N}_t)_{t\ge0}$ the filtration generated by $ N$ and $X_0$,    and by $\F^{ N,1}=(\calf^{ N,1}_t)_{t\ge0}$ the filtration defined by $ \calf^{ N,1}_t= \calf^{ N}_t\vee \calf^1_t$.
We note that in fact also $\nu_k$, $N(dt,dy)$ and $\F^{ N,1}$ depend on $q$, but we omit indicating this dependence.

\begin{Lemma}\label{Xqpredictablegen}
    The process $X^q$ is càdlàg and $\F^{\bar N,1}$-adapted.
\end{Lemma}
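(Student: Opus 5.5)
Càdlàg is immediate from the construction: $X^q$ is piecewise constant and right-continuous by definition on the intervals $[T_{\nu_n},T_{\nu_{n+1}})$. These intervals have no accumulation point in finite time because $T_{\nu_n}\ge T_n$ (since $\nu_n\ge n$) and $T_n\to\infty$ a.s. for a Poisson process. So on every bounded interval there are finitely many jumps, each path is càdlàg, and the path is defined for all $t\ge0$. Once $\nu_n=\infty$ the process stays at $X_{\nu_{n-1}}$ forever; $\delta$ only appears as a mark of the point process and is never a value of $X^q$.

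For adaptedness, I write
$$X^q_t=X_0\,1_{t<T_{\nu_1}}+\sum_{n\ge1}X_{\nu_n}1_{T_{\nu_n}\le t<T_{\nu_{n+1}}}$$
(suitably interpreted for $S$-valued variables, i.e.\ by checking $\{X^q_t\in B\}$). It then suffices to show, by induction on $k$, that the following two facts hold:
\begin{itemize}
\item[(a)] $T_{\nu_k}$ is an $\F^{\bar N,1}$-stopping time;
\item[(b)] $X_{\nu_k}1_{T_{\nu_k}\le t}$ is $\calf^{\bar N,1}_t$-measurable, or more precisely $X_{\nu_k}$ is $\calf^{\bar N,1}_{T_{\nu_k}}$-measurable.
\end{itemize}
Then $\{X^q_t\in B\}=\bigcup_n\{X_{\nu_n}\in B,\,T_{\nu_n}\le t<T_{\nu_{n+1}}\}\in\calf^{\bar N,1}_t$.

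The inductive step is the main point. Given $\nu_k$, I have
$$\{T_{\nu_{k+1}}\le t\}=\bigcup_{m}\bigcup_{n>m}\{\nu_k=m\}\cap\{T_n\le t\}\cap\{U_n<q(T_n,X_m,X_n)/K\}\cap\bigcap_{m<l<n}\{U_l\ge q(T_l,X_m,X_l)/K\}.$$
Each event involves only data at times $T_l\le T_n\le t$, so it suffices to show that $1_{T_l\le t}\,q(T_l,X_m,X_l)$ is $\calf^{\bar N,1}_t$-measurable. This is where predictability of $q$ is used. Since $q$ is $\calp(\F^1)\otimes\calb(S)\otimes\calb(S)$-measurable, I will use a monotone class argument starting from generators $1_{F\times(s,r]}(\omega,u)\,\phi(x)\psi(y)$ with $F\in\calf^1_s$. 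For such generators, $1_{T_l\le t}\,1_F\,1_{s<T_l\le r}\,\phi(X_m)\psi(X_l)$ is $\calf^{\bar N,1}_t$-measurable, because $F\in\calf^1_s\subset\calf^1_t$ on $\{T_l>s\}$ and $T_l\le t$, while $X_m$ and $X_l$ are marks of atoms occurring before $t$, and hence belong to $\calf^{\bar N}_t$ on that event. The monotone class theorem then extends this to all $q$; composing a predictable process with the stopping time $T_l$ yields a variable measurable with respect to $\calf^{\bar N,1}_{T_l}$ (more precisely, to the strict past, which suffices).

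The event $\{\nu_k=m\}$ intersected with $\{T_m\le t\}$ belongs to $\calf^{\bar N,1}_t$ by the inductive hypothesis, which is formulated as $\{\nu_k=m\}\in\calf^{\bar N,1}_{T_m}$. The same computation proves this for $k+1$ with $m$ replaced by $n$, and it also gives (b).

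I expect the main obstacle to be the careful measurability bookkeeping of composing $q(\cdot,T_n,\cdot,\cdot)$ with random times and random marks. I will handle it via the monotone class argument together with the standard fact that a predictable process evaluated at a stopping time $\tau$ is $\calf_{\tau-}$-measurable.
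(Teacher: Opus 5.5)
Your proof is correct and follows the same structure as the paper's: you establish $\{\nu_k=m\}\in\calf^{\bar N,1}_{T_m}$ (the paper's Step I), deduce that $T_{\nu_k}$ is an $\F^{\bar N,1}$-stopping time and that $X_{\nu_k}$ is $\calf^{\bar N,1}_{T_{\nu_k}}$-measurable (Steps II--III), and conclude from the piecewise-constant representation. The differences are only in the details. You prove Step I by an explicit decomposition of $\{\nu_{k+1}=n\}$ together with a monotone class argument for $1_{\{T_l\le t\}}\,q(T_l,X_m,X_l)$, where the paper uses hitting times of an $(\calf^{\bar N,1}_{T_n})_n$-adapted sequence and says ``by composition''; you also justify non-explosion via $T_{\nu_n}\ge T_n\to\infty$, which the paper leaves implicit.
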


\noindent
{\bf Proof.}
Since
\begin{equation}
    \label{rapprXqgen}
X^q_t= \sum_{n\ge 0}X_{\nu_n}\,1_{[T_{\nu_n},T_{\nu_{n+1}})}(t) 1_{{\nu_n}<\infty}    
\end{equation}
we see that $X^q$ is clearly càdlàg. Adaptedness
  is intuitive, since at any time $t$ all its present and past values
and jump times  can be recovered observing $T_n$ and $q(T_n,i,j)$ up to time $t$ as well as the corresponding  $X_n,U_n$. Now we proceed to a formal proof.

\medskip 

{\it Step I: for each $k,n\ge 0$ 
$$\{\nu_k=n\}\in \calf^{\bar N,1}_{T_n},
$$ 
i.e., $\nu_k$ is a stopping time for the filtration 
$(\calf^{\bar N,1}_{T_n})_{n\ge0}$
}.

Since $T_n$ is a stopping time for $\F^{\bar N}$, it is also a stopping time for $\F^{\bar N,1}$. Since $(q(t,i.j))_t$
 is predictable for
 $\F^{1}$, it is also  predictable - and hence progressively measurable - for $\F^{\bar N,1}$. It follows that  $q(T_n,i.j)$ is $\F^{\bar N,1}_{T_n}$-measurable. The same holds for $(U_n,X_n)$ and hence for $q(T_n,i.X_n)$, by composition.  We define a discrete time filtration $\H= (\calh_n)_{n\ge 0}$ and, 
 for every $i\in S$, a time discrete process $(Y_n(i))_{n\ge0}$ by the formulae
 $$
Y_n(i)= (U_n,q(T_n,i.X_n)),\qquad \calh_n = 
 \F^{\bar N,1}_{T_n}
 $$
(here we set $U_0=0$). Then we have seen  that $(Y_n(i))_n$ is $\H$-adapted. It takes values in the set $\{(q,u)\,:\, 0<u<1, 0\le q <\infty\}$. 
Define  $D=\{(u,q)\,:\, u<q/K\}$. 
 
 We can express $\nu_1$ as the first hitting time of $D$ by the process $(Y_n(X_0))_n$: 
$$
\nu_{1}=\inf\{ n>0 \,:\,
U_n< q(T_n,X_{0},X_n)/K\}=
\inf\{ n>0 \,:\,
Y_n(X_0) \in D\}.
$$
Since $(Y_n(X_0))_n$
  is $\H$-adapted we conclude that $\nu_1$ is a stopping time for  $\H$.
Since $(X_n)_n$ is $\H$-adapted, the process $(X_{n\wedge \nu_1})_n$ is also $\H$-adapted.

Similarly, we can express $\nu_1$ and $\nu_2$ as the first and second hitting time of $D$ by the process 
$(Y_n(X_{n\wedge \nu_1}))_n$: 
$$
\nu_{1}=\inf\{ n>0 \,:\,
U_n< q(T_n,X_{n\wedge \nu_1},X_n)/K\}=
\inf\{ n>0 \,:\,
Y_n(X_{n\wedge \nu_1}) \in D\}
$$
$$
\nu_{2}=\inf\{ n>\nu_1 \,:\,
U_n< q(T_n,X_{\nu_1},X_{n\wedge \nu_1})/K\}=
\inf\{ n>\nu_1 \,:\,
Y_n(X_{n\wedge \nu_1}) \in D\}
$$
Since $(Y_n(X_{n\wedge \nu_1}))_n$
  is $\H$-adapted we conclude that $\nu_2$ is a stopping time for  $\H$.
Since $(X_n)_n$ is $\H$-adapted, the process $(X_{n\wedge \nu_2})_n$ is also $\H$-adapted. Iterating this argument we can show that all the random times $\nu_k$ are stopping times for  $\H$ and Step I is proved.

 \medskip

{\it Step II: for every $k\ge 0$, 
 $T_{\nu_k}$ is a stopping time for the filtration 
$\F^{\bar N,1}$}.

This is trivial for $k=0$, so assume $k\ge 1$. We write
$$
\{T_{\nu_k}\le t\}=\bigcup_{n\ge 0}
 \{\nu_k=n, T_{n}\le t\}$$
and we recall that $T_n$ is a stopping time for $\F^{\bar N,1}$ and, by Step I, that 
$\{\nu_k=n\}\in 
\calf^{\bar N,1}_{T_n}
$. It follows that $\{\nu_k=n, T_{n}\le t\}\in  \F^{\bar N,1}_t$ (by the very definition of $\calf^{\bar N,1}_{T_n}$) and therefore also 
$\{T_{n_k}\le t\}\in \F^{\bar N,1}_t$.

\medskip 

{\it Step III: for every $k\ge 0$, $X_{\nu_k}\,1_{\nu_k<\infty}$ is
 $\F^{\bar N,1}_{T_{\nu_k}}$-measurable}.

This is clear for $k=0$, since $\nu_0=0$, $T_0=0$ and $X_0$ is $\F^{\bar N}_0=\sigma(X_0)$-measurable. Next we  assume $k\ge1$. 

For any $B\subset S$ and any $t\ge0$ we have
$$
\{X_{\nu_k}\,1_{\nu_k<\infty}\in B, T_{\nu_k}\le t\}= \bigcup_{n\ge 1}
\{\nu_k=n, X_{n}\in B, T_{n}\le t\}.
$$
From Step I we have $\{\nu_k=n\}\in \F^{\bar N,1}_{T_{n}}$. Since $X_n$ is measurable with respect to  $  \F^{\bar N}_{T_{n}} \subset\F^{\bar N,1}_{T_{n}}$ it follows that 
$\{\nu_k=n, X_{n}\in B \}\in \F^{\bar N,1}_{T_{n}}$ and so
$\{\nu_k=n, X_{n}\in B, T_{n}\le t\}\in \F^{\bar N,1}_t$ (by definition of $\F^{\bar N,1}_{T_{n}}$) and finally we obtain
$\{X_{\nu_k}\,1_{\nu_k<\infty}\in B, T_{\nu_k}\le t\}\in \F^{\bar N,1}_t$ which proves Step III.

\medskip

Now adaptedness of $X^q$ follows from Steps II and III and the representation \eqref{rapprXqgen}.
\qed

We are now ready for the main result of this section.

\begin{Theorem}\label{compensatorecatenagen}
Suppose that Assumption \ref{settingprobabgen} holds and 
that  $\F^1=(\calf^1_t)_{t\ge0}$ is a filtration  in $(\Omega,\calf)$, with $\calf^1_\infty$ independent from the random elements in Assumption \ref{settingprobabgen}. With the previous notation,
 let  $q:\Omega\times[0,\infty)\times S\times S\to \R$ be
 $\calp(\F^{1})\otimes \calb(S)\otimes \calb(S)$-measurable and
 satisfy \eqref{qbdd}. Let us take the constant in Assumption \ref{settingprobabgen}-${\it 1}$ so large that  $K\ge C_q$.

Then the $\F^{ N,1}$- compensator  of $N(dt,dy)$ is
$$
\nu(dt,dy)= q(t,X^q_{t-},y)\,\mu(dy)\,dt.
$$
\end{Theorem}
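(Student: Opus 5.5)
The plan is to realise $N$ as a thinning of $\bar N$ by a predictable indicator, and then read off its compensator from that of $\bar N$. Concretely, I will show that $\P$-a.s.
$$
N(dt,dy)=\int_{(0,1)} 1_{\{u< q(t,X^q_{t-},y)/K\}}\,\bar N(dt,dy,du),
$$
as measures on $(0,\infty)\times S$. The argument for this identity is inductive on the jumps of $X^q$. On $(T_{\nu_k},T_{\nu_{k+1}}]$ we have $X^q_{t-}=X_{\nu_k}$. Hence an atom $(T_n,X_n,U_n)$ of $\bar N$ with $\nu_k<n\le\nu_{k+1}$ passes the test $U_n<q(T_n,X^q_{T_n-},X_n)/K$ exactly when $n=\nu_{k+1}$, by the definition of $\nu_{k+1}$. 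This is precisely the atom $(T_{\nu_{k+1}},X_{\nu_{k+1}})$ of $N$. Here I use that the $T_n$ are a.s.\ distinct and increasing, so that $X^q_{T_n-}$ is well defined and equals $X_{\nu_k}$ for such $n$.

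Next, set $H(\omega,t,y,u)=1_{\{u<q(t,X^q_{t-},y)/K\}}$. I will check that $H$ is $\calp(\F^{\bar N,1})\otimes\calb(S)\otimes\calb((0,1))$-measurable. The factor $q$ is $\calp(\F^1)\otimes\calb(S)\otimes\calb(S)$-measurable, and $\calp(\F^1)\subset\calp(\F^{\bar N,1})$. The process $X^q_{t-}$ is left-continuous and $\F^{\bar N,1}$-adapted by Lemma \ref{Xqpredictablegen}, hence $\F^{\bar N,1}$-predictable. Composing $(\omega,t,x,y)\mapsto q$ with $x=X^q_{t-}$ preserves this measurability, using a standard monotone class argument on $\calb(S)$. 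Since $\bar\nu=K\,dt\,\mu(dy)\,du$ is the $\F^{\bar N,1}$-compensator of $\bar N$ and $H$ is bounded and predictable, for every nonnegative $\calp(\F^{\bar N,1})\otimes\calb(S)$-measurable $C(t,y)$ we get
$$
\E\int C\,dN=\E\int C(t,y)H(t,y,u)\,\bar N(dt,dy,du)=\E\int C(t,y)H\,K\,dt\,\mu(dy)\,du=\E\int C(t,y)\,q(t,X^q_{t-},y)\,\mu(dy)\,dt.
$$
The last equality integrates out $u$, using $q/K\le C_q/K\le1$. So $\nu(dt,dy)=q(t,X^q_{t-},y)\mu(dy)dt$ is the $\F^{\bar N,1}$-compensator of $N$.

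Finally I pass to the smaller filtration $\F^{N,1}\subset\F^{\bar N,1}$. The measure $\nu$ is $\F^{N,1}$-predictable: $X^q$ is $\F^{N}$-adapted, since it is built from the atoms of $N$ and $X_0$, and $q$ is $\F^1$-predictable. The defining identity $\E\int C\,dN=\E\int C\,d\nu$ then holds in particular for all $\calp(\F^{N,1})\otimes\calb(S)$-measurable $C\ge0$, because these are also $\F^{\bar N,1}$-predictable. This is exactly the characterization of the $\F^{N,1}$-compensator.

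The main obstacle I expect is the predictability of $H$ together with the pathwise identification of $N$ as a thinning. The delicate point in the latter is the use of $X^q_{t-}$ rather than $X_{\nu_k}$ indexed by a random $k$. It is handled by the interval-by-interval argument above, with Lemma \ref{Xqpredictablegen} supplying adaptedness.
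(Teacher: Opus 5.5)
Your proposal is correct and follows essentially the same route as the paper. In both, $N$ is written pathwise as the thinning of $\bar N$ by the $\F^{\bar N,1}$-predictable indicator $1\{u<q(t,X^q_{t-},y)/K\}$, using $X^q_{T_n-}=X_{\nu_k}$ for $\nu_k<n\le\nu_{k+1}$; then the compensator $K\,dt\,\mu(dy)\,du$ of $\bar N$ is applied and $u$ is integrated out using $C_q\le K$. The only difference is organizational: you first identify the $\F^{\bar N,1}$-compensator and then restrict to $\F^{N,1}$, whereas the paper tests directly against $\F^{N,1}$-predictable integrands. Both rely on $\F^{N,1}\subset\F^{\bar N,1}$, which follows from Steps II--III in the proof of Lemma \ref{Xqpredictablegen}.
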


\noindent {\bf Proof.}
First we note that $(X^q_{t-})$ is $\F^N$-predictable, so that by the measurability assumptions on $q$ the random measure $q(t,X^q_{t-},y)\,\mu(dy)\,dt$ is 
$\F^{ N,1}$-predictable.

Let $H(t,y)\ge 0$ be an 
$\F^{ N,1}$-predictable process. We have
$$
\E\int_S\int_0^\infty H(t,y)\,N(dt,dy)=\E\sum_{k\ge 1} H(T_{\nu_k},X_{\nu_k})\,1_{T_{\nu_k}<\infty}.
$$
For $n\ge 1$ and $k\ge 1$ such that  $\nu_{k-1}<n<\nu_{k}$  the inequality $U_n$ $\ge $  $q(T_n,X_{\nu_k},U_n)/K$
takes place. So we may rewrite the previous sum adding several null terms as follows:
$$
\sum_{k\ge 1} H_{T_{\nu_k}}(X_{\nu_k})\,1_{T_{\nu_k}<\infty}=
\\
\sum_{k\ge 1} 
\bigg[
\sum_{n= 1+\nu_{k-1}}^{\nu_k}
H(T_{n},X_{n})\,
1\Big(U_n< q(T_n,X_{\nu_{k-1}},X_n)/K\Big)
    \bigg]
    1_{T_{\nu_{k}}<\infty}
$$
(in each sum in square brackets  only the last term may be non-zero). Next note that, for  $k\ge 1$,
\begin{align*}
       X_{\nu_{k-1}}=X^q(t-) \hbox{ for } T_{\nu_{k-1}}<t\le T_{\nu_k} \quad \Longrightarrow \quad
    X_{\nu_{k-1}}=X^q(T_n-) \hbox{ for } \nu_{k-1}<n \le \nu_k.
\end{align*}   
So we obtain
\begin{align*}
\sum_{k\ge 1} H_{T_{\nu_k}}(X_{\nu_k})\,1_{T_{\nu_k}<\infty}
&=
\sum_{k\ge 1} 
\bigg[
\sum_{n= 1+\nu_{k-1}}^{\nu_k}
H(T_{n},X_{n})\,
1\Big(U_n< q(T_n,X^q(T_n-),X_n)/K\Big)
    \bigg]
    1_{T_{\nu_{k}}<\infty}
\\  &
=\sum_{n\ge 1} H(T_{n},X_{n})\,
 1\Big(U_{n}< q(T_{n},X^q(T_{n}-),X_{n})/{K}\Big).
\end{align*}
This may be written as an integral with respect to the random measure $\bar N$, leading to
$$
\E\int_S\int_0^\infty H(t,y)\,N(dt,dy)=
\E\int_S\int_0^\infty \int_0^1 H(t,y)
\,
 1\Big(u< q(t,X^q(t-),y)/{K}\Big) \,\bar N(dt,dy,du).
$$
From Lemma \ref{Xqpredictablegen} it follows that $(X^q(t-))$ is $\F^{\bar N,1}$-predictable and so is the integrand in the right-hand side of the last displayed formula. Recalling the form of the compensator of $\bar N$ we obtain
$$
\E\int_S\int_0^\infty H(t,y)\,N(dt,dy)=
\E\int_S\int_0^\infty \int_0^1 H(t,y)
\,
 1\Big(u< q(t,X^q(t-),y)/{K}\Big)\,du \, {K} \, dt\,\mu(dy).
$$
Noting that $q(t,X^q(t-),j)/{K}\le C_q/K\le 1$ 
we can compute the integral  in $du$ and 
we conclude that
$$
\E\int_S\int_0^\infty H(t,y)\,N(dt,dy)=
\E\int_S\int_0^\infty H(t,y)
\, q(t,X^q(t-),y)  \, dt\,\mu(dy)
=
\E\int_S\int_0^\infty H(t,y)
\, \nu(  dy,dt).
\qed
$$

\section{The partially observed control problem and its reformulations}
\label{secformulazionepb}

In this section we 
suppose that Assumption \ref{settingprobabgen}  holds. 
From now on we also assume that the state space $S$ is finite; we will use letters $i,j$ to denote its elements. We need to introduce a space $A$ of control actions where the  control process $(\alpha_t)$ takes values.
We also need to introduce  controlled transition rates $q(a,t,i,j)$, a function $h(t,i,a)$ to model the observation and real functions $f(t,i,a)$ and $g(i)$ to define the reward to be maximized; they may depend on the control action $a\in A$.  
According to the usual approach (see e.g. \cite{Bebook})
we will initially set the control problem under the reference probability measure $\P$, so that in particular the observation $W$ will be a given Brownian motion under $\P$.
This has the advantage that the corresponding filtration does not depend on the control process. 
 Here are the hypotheses we need and which be valid in the rest of the paper (in addition to Assumption \ref{settingprobabgen}).

\begin{Assumption}\label{ipcoeffpb}
    \begin{enumerate}
        \item $(W_t)_{t\ge0}$ is a standard $d$-dimensional Brownian motion defined in $(\Omega,\calf,\P)$; we denote $\F^W=(\calf^W_t)_{t\ge 0}$ its completed filtration.
     
        \item $S$ is a finite set with cardinality $N$. $A$ is a Polish space. $T>0$ and $\beta>0$ are given constants.
\item For every $i,j\in S$ ($i\neq j$) we are given numbers $g(i)\in\R$ and  functions
$$
q(\cdot,\cdot,i,j):A\times [0,\infty) \to [0,\infty), \quad
h(i,\cdot,\cdot):A\times [0,\infty) \to \R^d, \quad
f(i,\cdot,\cdot):A\times [0,\infty) \to \R,
$$
which        are Borel measurable, and there exists a constant $K_0$ such that
\begin{equation}
    \label{coeffbdd}
|q(a,t,i,j)| + |h(i,a,t)| + |f(i,a,t)|
+|g(i)|
 \le K_0,
\qquad a\in A;\,t\ge 0;\, i,j\in S\, (i\neq j).
\end{equation}
        
    \item 
     The constant in Assumption \ref{settingprobabgen}-${\it 1}$ is taken so large that
    $$
N\cdot q(a,t,i,j)\le K,
\qquad a\in A;\,t\ge 0;\,i,j\in S\,(i\neq j).
$$
\item    The random variables in Assumption \ref{settingprobabgen}-${\it 2}$ are uniformly distributed on $S$.
    \end{enumerate}
\end{Assumption}  
We complete the definition of the   rate matrix setting as usual
$$q(a,t,i,i)=-\sum_{j\neq i} q(a,t,i,j).
$$
We finally define the set of admissible controls of the partial observation problem as
$$
\cala=\{ \alpha:\Omega\times [0,\infty)\to A,\; \F^W\hbox{-predictable}\}.
$$

\subsection{The partially observed control problem for the reference probability}
\label{subsecrefprob}

For every $\alpha\in\cala$ 
we next define a corresponding controlled $S$-valued process  using the construction of the previous section. 
Instead of a general filtration $\F^1$ now we take the filtration 
$\F^W$. Then
we  consider the $\F^W$-predictable processes $(N\cdot q(\alpha_t,t,i,j))_t$
and we construct the corresponding process $X^q$ as in the previous section, that will now be called  $X^\alpha$. 
Explicitly, we define  $\nu_0=0$ and, for $k\ge 0$,
$$
\nu_{k+1}=\inf\{ n>\nu_k \,:\,
U_n< N\cdot q(\alpha_{T_n},T_n,X_{\nu_k},X_n)/K\},
$$
with the convention $\inf \emptyset =\infty$.
We take an element $\delta\notin S$, we set 
$T_{\nu_n}=\infty$ and $X_{\nu_n}=\delta$ if ${\nu_n}=\infty$ and
we 
consider the marked point process  $(T_{\nu_n},X_{\nu_n})_{n\ge 1}$. The corresponding 
$S\cup\{\delta\}$-valued process $(X^\alpha_t)_{t\ge 0}$ (starting from $X_0$ at time $0
$) defined by
$$ 
X^\alpha_t= X_{\nu_n},\quad \text{for}\quad  
T_{\nu_n}\le t< T_{\nu_{n+1}},\,n\ge0,
$$
is the controlled process corresponding to $\alpha\in\cala$.
On the finite state space $S$, measures $\mu(dy)$ are identified with their masses $\mu(j)$ at any point $j\in S$. For instance, the uniform distribution of the variables $X_n$ is $\mu(j)=1/N$ (which accounts for the factor $N$ in some of the previous formulae). Correspondingly, the random measure on $(0,\infty)\times S$  associated to  $(T_{\nu_n},X_{\nu_n})_{n\ge 1}$ is now denoted
$$
  N^\alpha(dt,j )=\sum_{n\ge 1} \delta_{(T_{\nu_n},X_{\nu_n})}(dt,j)\, 1_{T_{\nu_n}<\infty}.
$$
We denote $\F^{ N^\alpha,W}$  the filtration generated by $  N^\alpha$, $X_0$ and $W$. By Theorem \ref{compensatorecatenagen}, the 
  $\F^{ N^\alpha,W}$-compensator  of $N^\alpha(dt,j)$ is
$$
\nu^\alpha(dt,j)= q(\alpha_t,t,X^\alpha_{t-},j)\,dt.
$$ 
This formula justifies the interpretation of $X^\alpha$ as a Markov chain with ``stochastic transition rates'' given by $q(\alpha_t,t,i,j)$.

Having constructed the controlled processes $X^\alpha$ we can formulate the optimization problem by introducing the reward functional to be maximized. 
Let us define
$$
Z^\alpha_t=\exp\left(\int_0^t h(X^\alpha_s,\alpha_s,s)\,dW_s-\frac{1}{2} \int_0^t |h(X^\alpha_s,\alpha_s,s)|^2\,ds\right),
\qquad t\ge 0.
$$
The optimal control problem for a finite  horizon $T$ consists in maximizing the reward functional
$$
J_T(\alpha)=\E\left[ \int_0^T Z^\alpha_t\,
f(X^\alpha_t,\alpha_t,t)\,dt+ Z^\alpha_T\,
g(X^\alpha_T)
\right]
$$
over all $\alpha\in\cala$.
The infinite horizon optimal control problem consists in maximizing the discounted reward functional
$$
J_\infty(\alpha)=\E\left[ \int_0^\infty 
e^{-\beta t} Z^\alpha_t\,
f(X^\alpha_t,\alpha_t)\,dt
\right]
$$
 with discount rate $\beta$. In the infinite horizon case the functions $q$, $h$ and $f$ are taken to be time-independent. The occurrence of the process $Z^\alpha$ is explained in the following reformulation.

\subsection{The partially observed control problem for the physical probability}

Here we show that the previous formulation corresponds to the original control problem outlined in the introduction, provided an appropriate weak sense formulation is given. The first step will be to construct a physical probability under which the observation has the desired form 
\eqref{obsintro}. 
Let us start with some preliminary remarks.

We note that, since $h$ is bounded, the process 
$Z^\alpha$ introduced above is 
a continuous $\F^{ N^\alpha,W}$-martingale.
By the form of the compensator, the processes
$$
M^{j,\alpha}_t:=
N^\alpha((0,t],j)- \nu^\alpha((0,t],j), \qquad t\ge0,
$$
are also $\F^{ N^\alpha,W}$-martingales.
 Since they are locally of integrable variation, they are purely discontinuous martingales, hence orthogonal to $Z^\alpha$, which means that the products $
M^{j,\alpha}\,Z^\alpha$ are local martingales.

Now let us define, for each $t\ge0$, a consistent family of   probabilities $\bar\P^\alpha_t$
on $\calf^{ N^\alpha,W}_t$
corresponding to the Doléans exponential $Z^\alpha$, namely:
$$
d\bar\P^\alpha_t=Z^\alpha_t\,d\P\Big|_{\F^{ N^\alpha,W}_t},
$$
as well as the process
$
B^\alpha_t:= W_t-\int_0^th(X_s^\alpha,\alpha_s,s)\,ds$, $t\ge0$.
Then the following holds.
\begin{enumerate}
    \item For every $T>0$, under $\bar\P^\alpha_T$ the process $B^\alpha$ is a Wiener process on $[0,T]$:

    This follows from the Girsanov theorem.

    \item  For every $T>0$, under $\bar\P^\alpha_T$ the random measure $N^\alpha(dt,j)$ has the same $\F^{ N^\alpha,W}$-compensator $\nu^\alpha(dt,j)= q(\alpha_t,X^\alpha_{t-},j)\,dt.$ 

Indeed, the processes 
$M^{j,\alpha}$ remain $ \F^{N^\alpha,W}$-martingales under $\bar\P^\alpha$, because $M^{j,\alpha}Z^\alpha$ are $\F^{N^\alpha,W}$-local martingales under $\P$.
\end{enumerate}

It is easy to check that the reward functionals can be written
$$
J_T(\alpha)=\bar\E^\alpha_T\left[ \int_0^T 
f(X^\alpha_t,\alpha_t,t)\,dt  + 
g(X^\alpha_T)
\right],
$$
where $\bar\E^\alpha_T$ denotes expectation under $\bar\P^\alpha_T$ and, in the infinite horizon case,
\begin{equation}
    \label{Jinftybar}
J_\infty(\alpha)=\lim_{T\to\infty}\bar\E^\alpha_T\left[ \int_0^T
e^{-\beta t} 
f(X^\alpha_t,\alpha_t)\,dt
\right].
\end{equation}
This is the original optimal control problem outlined in the introduction: indeed, on each interval $[0,T]$, the controlled Markov chain $X^\alpha$ has 
$\F^{ N^\alpha,W}$-compensator  
$$
\nu^\alpha(dt,j)= q(\alpha_t,t,X^\alpha_{t-},j)\,dt.
$$ 
and the observation process has the form
$$
W_t=\int_0^th(X_s^\alpha,\alpha_s,s)\,ds+
B^\alpha_t,\qquad  t\ge0.
$$
This   optimization problem is in  weak form, since the physical probabilities $\bar\P_T^\alpha$ and the observation noise $B^\alpha$ depend on $\alpha$.

\begin{Remark}\emph{Suppose that, in the finite horizon case, one wishes to maximize a functional of the form
$$ 
\bar\E_T^\alpha \left[ \sum_j
\int_0^T
\ell(X^\alpha_{t-},j,\alpha_t,t)\,
N^\alpha(dt, j)
\right]
$$
for some bounded Borel measurable real function $\ell(i,j,a,t)$. This is a running reward depending explicitly on the random measure $N^\alpha(dt,j)$. Since the previous integrand is 
$\F^{ N^\alpha,W}$-predictable, this is the same as
$$ 
\bar\E_T^\alpha\left[ \sum_j
\int_0^T 
\ell(X^\alpha_t,j,\alpha_t,t)\,
q(\alpha_t, t,X^\alpha_t,j)\,
dt  
\right]
$$
which has the form of the running reward considered before, setting
$f(i,a,t)=\sum_j\ell(i,j,a,t) q(a,t,i,j)$. Similar considerations apply to the infinite horizon case.
}
\end{Remark}

\subsection{The separated optimal control problem}

Still assuming that Assumptions \ref{settingprobabgen} and \ref{ipcoeffpb} hold true, we come back to the optimization problem formulated in  subsection \ref{subsecrefprob}, that we rewrite in a different equivalent form.
It is convenient to introduce the 
generator   $Q_t^a$ of the controlled, time-dependent Markov chain, which maps any function $\phi:S\to \R$ to the function 
$Q_t^a\phi:S\to \R$
given by
$$
Q^a_t\phi (i)=\sum_j\phi(j) \, q(a,t,i,j)
\qquad i\in S.
$$
Next we introduce 
the unnormalized conditional law setting, for any $\phi$, 
$$
\rho_t(\phi)=\E[\phi(X_t^\alpha)\,Z^\alpha_t\,|\, \calf_t^W].
$$
The conditional expectation is taken under 
 the reference probability $\P$. The process $(\rho_t(\phi))_t$ is understood  as the optional projection of $(\phi(X_t^\alpha)\,Z^\alpha_t)$ for the filtration $\F^W$ and the formula defines an optional process with values in the space of nonnegative finite measures over $S$; we refer to \cite{BaCribook} for details. It is easy to show that the reward functionals can be written
 $$
J_T(\alpha)=\E\left[ \int_0^T  
\rho_t(f(\cdot,\alpha_t,t))\,dt + \rho_T(
g)
\right]\quad \text{and}\quad
J_\infty(\alpha)=\E\left[ \int_0^\infty e^{-\beta t} 
\rho_t(f(\cdot,\alpha_t))\,dt
\right].
$$
 The motivation to introduce the process $\rho$ is the well known fact that it is a solution to the 
 the Zakai filtering equation which, in the present case of a finite-state Markov chain, is also called  the Wonham filter:
for every $\phi:S\to \R$, 
$$
d\rho_t(\phi)= \rho_t(Q_t^{\alpha_t}\phi)\,dt
+ \rho_t\Big(h(\cdot,\alpha_t,t)\phi(\cdot)\Big)\,dW_t, \qquad \rho_0(\phi)=\E[\phi(X_0)].
$$
We will soon see that for every admissible control and any initial condition $\rho_0$ there exists a unique solution.
This way we have obtained the so-called separated problem: the state equation is the controlled Wonham filter and the reward functionals depend on the control and the corresponding state trajectory.

\subsection{Optimal control  of the Wonham filter: setting and preliminary results}

Here we introduce the appropriate formulation for our optimization problem, that will be the aim of the analysis in all the following sections.

We start from the separated problem and we first note that the 
space of nonnegative finite measures over $S$ can be identified with
$$
 D=\{x=(x_1,\ldots,x_N)\in \R^N\,:\, x_i\ge0,\, i=1,\ldots,N\}.
 $$ 
 Let us define
$\rho_t^i:=\rho_t(1_{\{i\}})$, where $1_{\{i\}}:S\to\R$ is the indicator function of  state $i$.  Noting that for every $\phi$ we have $\rho_t(\phi)=\sum_i \rho_t^i\,\phi(i)$, easy computations show  the controlled Wonham filtering equations can be rewritten as a system of SDEs for the process $(\rho_t^1,\ldots,\rho_t^N)$.
Allowing a general starting time $t\in [0,T]$ and a general initial condition $x=(x_i)\in D$, the finite horizon problem is 
\begin{equation}
    \label{pfHf}
\left\{
\begin{array}{rclr}
d\rho_s^i&=&\displaystyle \sum_j \rho_s^j \,q(\alpha_s,s,j,i)\,ds
+ \rho_s^i\,h(i,\alpha_s,s)\,dW_s,
& s\in[ t,T],\, i\in S,
\\
\rho_t^i&=&x_i,& x\in D,\,  i\in S,
\\
J_T(t,x,\alpha)&=&\displaystyle\E\left[ \int_t^T  \sum_i
\rho_s^i\,f(i,\alpha_s,s)\,ds + \sum_i\rho_T^i\,g(i)
\right],&
\end{array}\right.
\end{equation}
and the infinite horizon problem starting at time $0$ is
\begin{equation}
    \label{pfHinf}
\left\{
\begin{array}{rclr}
d\rho_t^i&=&\displaystyle \sum_j \rho_s^j \,q(\alpha_t,j,i)\,dt
+ \rho_s^i\,h(i,\alpha_t)\,dW_t,
&  t\ge 0,\,i\in S,
\\
\rho_0^i&=&x_i,& x\in D,\,  i\in S,
\\
J_\infty(x,\alpha)&=&\displaystyle\E\left[ \int_0^\infty e^{-\beta t}  \sum_i
\rho_t^i\,f(i,\alpha_t)\,dt
\right].&
\end{array}\right.
\end{equation}
 Clearly, the problem starting at time $t\ge0$ also admits a rephrasing in the original formulations, even under the physical probability.
Now we define the value functions for these problems:
\begin{equation}
    \label{valuefunctions}
    V(t,x)= \sup_{\alpha\in \cala} J_T(t,x,\alpha),\qquad
    V(x)= \sup_{\alpha\in \cala} J_\infty(x,\alpha),\qquad \quad t\in [0,T],\, x\in D.
\end{equation} 

In the following Proposition we collect some preliminary properties of these optimization problems and the corresponding value functions.

\begin{Proposition}\label{investimeV}
Suppose that Assumptions \ref{settingprobabgen} and \ref{ipcoeffpb} hold true, and that the coefficients do not depend on time in the infinite horizon case.
\begin{enumerate}
    \item For the solution to \eqref{pfHf}
we have, $\P$-a.s., $\rho_s^i\in D $ for every $s\in [t,T]$, $i\in S$.
If $x_i> 0$ for every $i\in S$ then, $\P$-a.s.,  $\rho_s^i> 0$ for every $s\in [t,T]$ and $i\in S$.
Similar results hold for the solution to \eqref{pfHinf}. 

    \item For every $t\in [0,T]$ the function $x\mapsto V(t,x)$ is convex; moreover there exists a constant $C$ such that for every $t\in [0,T]$, $x,\bar x\in D$, 
    \begin{equation}
        \label{Vtlinearlip}
|V(t,x)-V(t,\bar x)|\le C\,|x-\bar x| ,\qquad
|    V(t,x)|\le C\,(1+|x|).
\end{equation}
\item the function $x\mapsto V(x)$ is convex, hence locally Lipschitz; moreover for every  $x\in D$, 
    \begin{equation}
        \label{Vbdd}
|    V(x)|\le \frac{1}{\beta}\,\sup |f|.
\end{equation}
\end{enumerate}

\end{Proposition}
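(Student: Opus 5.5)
The plan is to exploit that the state equation \eqref{pfHf} is a linear SDE with bounded, $\F^W$-predictable coefficients, for which existence and uniqueness of a strong solution are standard. The two structural facts used throughout are that $x\mapsto\rho^{t,x,\alpha}$ is linear for each fixed $\alpha$, and that $J_T(t,x,\alpha)$ is therefore linear in $x$.

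\emph{Point 1.} I would write the equation for component $i$ as
\[
d\rho^i_s=\rho^i_s\big(q(\alpha_s,s,i,i)\,ds+h(i,\alpha_s,s)\,dW_s\big)+\sum_{j\neq i}\rho^j_s\,q(\alpha_s,s,j,i)\,ds .
\]
Set $\mathcal E^i_s=\exp\big(\int_t^s h(i,\alpha_r,r)\,dW_r+\int_t^s(q(\alpha_r,r,i,i)-\tfrac12|h(i,\alpha_r,r)|^2)\,dr\big)>0$. Variation of constants then gives
\[
\rho^i_s=\mathcal E^i_s\Big(x_i+\int_t^s(\mathcal E^i_r)^{-1}\sum_{j\neq i}\rho^j_r\,q(\alpha_r,r,j,i)\,dr\Big).
\]
To obtain nonnegativity I would run a Picard iteration on this mild form. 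The map sending $(\rho^j)_j$ to the right-hand side preserves nonnegative continuous processes, since $q(\cdot,\cdot,j,i)\ge0$ for $j\neq i$. The fixed point therefore lies in $D$, and by uniqueness it coincides with the solution. Alternatively, one can apply the same representation with $\rho^j$ replaced by $(\rho^j)^+$ and use a comparison argument.

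Once $\rho\ge0$ is known, the integral term is nonnegative. Hence $\rho^i_s\ge\mathcal E^i_s x_i>0$ whenever $x_i>0$. The infinite horizon case is identical, working on every $[0,T]$.

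\emph{Point 2.} For fixed $\alpha$, linearity gives $J_T(t,x,\alpha)=\sum_i x_i\,J_T(t,e_i,\alpha)$. Thus $V(t,\cdot)$ is a supremum of linear functions and is convex. For the Lipschitz bound I would estimate
\[
|J_T(t,x,\alpha)-J_T(t,\bar x,\alpha)|\le K_0(T+1)\,\E\Big[\sup_s|\rho^{x}_s-\rho^{\bar x}_s|\Big],
\]
where $\rho^x-\rho^{\bar x}=\rho^{x-\bar x}$ solves the same linear equation. Standard $L^2$ estimates (BDG plus Gronwall, using bounded coefficients) give $\E\sup_s|\rho^{y}_s|^2\le C|y|^2$, with $C$ independent of $\alpha$ and $t$. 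Taking the supremum over $\alpha$ yields the Lipschitz estimate in \eqref{Vtlinearlip}, and then $|V(t,x)|\le |V(t,0)|+C|x|=C|x|$.

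\emph{Point 3.} Convexity follows exactly as in point 2, and finite convex functions on the convex set $D$ are locally Lipschitz in the interior. For \eqref{Vbdd}, the key observation is that total mass is a martingale. Summing the equations over $i$ and using $\sum_i q(a,j,i)=0$ gives
\[
d\sum_i\rho^i_t=\sum_i\rho^i_t\,h(i,\alpha_t)\,dW_t .
\]
This is a true martingale by the $L^2$ bounds, so $\E\sum_i\rho^i_t=\sum_ix_i$. Since $\rho\ge0$ by point 1,
\[
|J_\infty(x,\alpha)|\le\sup|f|\int_0^\infty e^{-\beta t}\,\E\sum_i\rho^i_t\,dt=\frac{\sup|f|}{\beta}\sum_i x_i .
\]
This matches \eqref{Vbdd} only for $\sum_ix_i\le1$, i.e.\ for initial conditions that are probability-type. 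I would therefore check whether the intended statement has a factor $|x|_1$, and present the bound $\frac{1}{\beta}\sup|f|\,\sum_ix_i$, which is what the argument gives.

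The main obstacle is point 1: making the positivity argument rigorous, since a direct Itô argument on $(\rho^i)^-$ is awkward when the coupling is off-diagonal. The mild-form Picard iteration is the route I would try first.
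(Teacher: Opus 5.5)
Your proposal is correct, with the corrected form of \eqref{Vbdd} that you propose. Point 2 and the convexity part of point 3 follow the paper: linearity of $x\mapsto J$, standard $L^2$ estimates, and a supremum of linear functions.

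\textbf{Point 1.} The difference is in nonnegativity. The paper does not argue analytically. For $x\neq0$ it rescales $x$ to a probability vector and identifies $c\rho$ with the unnormalized conditional law $\E[1_{\{X^\alpha_s=i\}}Z^\alpha_s\,|\,\calf^W_s]$, so positivity is inherited from the filtering representation, which it quotes from the literature. Your mild-form Picard iteration is pathwise and self-contained. It works for any linear system with nonnegative off-diagonal drift and diagonal noise. To make it airtight, say explicitly that:
\begin{enumerate}
\item for each $\omega$ the kernel $\mathcal E^i_s(\mathcal E^i_r)^{-1}q(\alpha_r,r,j,i)$ is bounded for $t\le r\le s\le T$, so the iteration converges uniformly by the usual Volterra/Gronwall estimate;
\item the limit is continuous and adapted;
\item the It\^o computation $d\big((\mathcal E^i)^{-1}\rho^i\big)=(\mathcal E^i)^{-1}\sum_{j\neq i}\rho^j q(\alpha,\cdot,j,i)\,ds$ shows that the SDE solution satisfies the mild equation, so the two coincide by pathwise uniqueness.
\end{enumerate}
Strict positivity is then the same variation-of-constants lower bound as the paper's robust form. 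The paper divides only by $\exp(\int h\,dW)$ and treats an ODE with random coefficients; you fold the diagonal rate and the It\^o correction into $\mathcal E^i$.

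\textbf{Point 3.} Your doubt about \eqref{Vbdd} is justified. Since $J_\infty(\cdot,\alpha)$ is linear and $\cala$ does not depend on $x$, $V(cx)=cV(x)$ for $c>0$, so $V$ can be bounded on $D$ only if $V\equiv0$. With $f\equiv1$, your total-mass martingale gives $V(x)=\beta^{-1}\sum_ix_i$, which violates \eqref{Vbdd} as soon as $\sum_ix_i>1$.

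The paper derives the bound from the physical-probability representation \eqref{Jinftybar}, where $\bar\P^\alpha_T$ is a probability measure. That representation corresponds to $x$ being the law of $X_0$, so it only covers the simplex $\sum_ix_i=1$. Your bound $|V(x)|\le\beta^{-1}\sup|f|\,\sum_ix_i$ is the correct statement on $D$. This also bears on the later characterization of $V$ as the unique \emph{bounded} viscosity solution.

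Your argument in fact yields more. Writing $J_\infty(x,\alpha)=\sum_ix_iJ_\infty(e_i,\alpha)$ with $|J_\infty(e_i,\alpha)|\le\beta^{-1}\sup|f|$ gives $|V(x)-V(\bar x)|\le\beta^{-1}\sup|f|\,\sum_i|x_i-\bar x_i|$. That is global Lipschitz continuity for every $\beta>0$, stronger than the local Lipschitz claim and than the Remark following the proposition.
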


\noindent
{\bf Proof.}
{\it 1.}  We note that the equation is linear with respect to the state variable and has bounded (stochastic) coefficients. So the classical conditions on Lipschitz continuity and linear growth hold true and the equation has a unique continuous $\F^W$-adapted solution starting from any $x\in \R^N$. If $x=0\in D$ then $\rho =0$. If $x\in D $ and $x\neq 0$ then $cx$ is a probability measure on $S$ for $c=(\sum_ix_i)^{-1}$ so that $c\rho_t$ coincides with the unnnormalized conditional distribution and so it is a nonnegative measure on $S$; it follows that $\rho^i_t\ge0$.

To prove the result of strict positivity we write
the state equations   as a system of ordinary (deterministic) differential equations with stochastic coefficient: this is the so-called {robust form} of the Zakai equation. We set
$$
\nu_t^i=\rho_t^i\,\exp\left( -\int_0^t h(i,\alpha_s)\,dW_s\right),
$$
and we look for the equation satisfied by $\nu^i$. Computing the Ito differential $d\nu_t^i$,  after some calculations we have
$$
 d\nu^i_t= 
\sum_{j} \nu_t^j \,q(\alpha_t,j,i)
\exp\left( \int_0^t [h(j,\alpha_s)-h(i,\alpha_s)]\,dW_s\right)dt-\frac{1}{2}
\nu_t^i \, |h(i,\alpha_s)|^2dt
.
$$
So we obtain the robust equation in the form
$
\frac{d}{dt}\nu^i_t= 
\sum_j a^{ij}_t\nu_t^j  
$ setting
$$
a^{ii}_t= q(\alpha_t,i,i)-\frac{1}{2}|h(i,\alpha_t)|^2,\qquad
a^{ij}_t= q(\alpha_t,j,i)
\exp\left( \int_0^t [h(j,\alpha_s)-h(i,\alpha_s)]\,dW_s\right), \qquad j\neq i.
$$
 We have
$$
\frac{d}{dt}\nu^i_t=  a^{ii}_t\nu^i_t+
\sum_{j\neq i} a^{ij}_t\nu_t^j  =
a^{ii}_t\nu^i_t+g^i_t
$$
where $g^i_t:= 
\sum_{j\neq i} a^{ij}_t\nu_t^j\ge 0$ by the nonnegativity result already proved. It follows that
$$
\nu_t^i=\nu_0^i \exp\left( \int_0^t  a^{ii}_s\,ds\right) +
\int_0^t \exp\left( \int_s^t  a^{ii}_r\,dr\right) g^i_s\,ds
\ge \nu_0^i \exp\left( \int_0^t  a^{ii}_s\,ds\right) >0
$$
  for every $i\in S$ if $\nu_0^i> 0$ for every $i\in S$. The same clearly holds for $\rho^i$.

  \medskip
{\it 2.} 
Let $\rho$, $\bar\rho$ denote the solutions starting from $x,\bar x$. By standard estimates we have $\E\sup_{s\in [t,T]}|\rho_s-\bar\rho_s|^2\le C|x-\bar x|^2$ for some constant $C$ (depending also on $T$). By the boundedness of $f$ and $g$
it follows easily that
$|J_T(t,x,\alpha)-
J_T(t,\bar x,\alpha)|^2\le C'|x-\bar x|^2$ for some constant $C'$ independent of $\alpha$. \eqref{Vtlinearlip} follows immediately.

We note that the state equation and the reward functional are linear. It follows that $x\mapsto J_T(t,x,\alpha)$ is linear and $x\mapsto V(t,x)$ is convex as the supremum of linear functions.

  \medskip

{\it 3.} 
From formula 
    \eqref{Jinftybar} it follows that
    $|    J_\infty(x,\alpha)|\le   \liminf_{T\to\infty}  \int_0^T
e^{-\beta t} \sup |f|\,dt
\le \sup |f|/\beta
$ and the estimate on $V$ holds. 
Convexity follows from linearity as before.
    \qed

\begin{Remark}
{\emph{
By similar arguments it is also easy to show that $x\mapsto V(x)$ is globally Lipschitz provided $\beta>0 $ is sufficiently large.
}}    
\end{Remark}

\section{Dynamic programming equation for infinite horizon: viscosity theory}
\label{sechjbellitt}

In this section we study the value function $V$ for the problem \eqref{pfHinf}. We suppose   that Assumptions \ref{settingprobabgen} and \ref{ipcoeffpb} hold  and that the coefficients do not depend on time. 

We will show that $V$ is the unique viscosity solution to the dynamic programming equation (also called Hamilton-Jacobi-Bellman equation - HJB) which takes the following form: for $x\in D$,
\begin{equation}
\label{HJBellittca}
\beta v(x)   -\sup_{a\in A}\bigg[
\frac{1}{2}\sum_{ij}\partial_{ij}^2v(x )\,
x_ix_j\sum_{k=1}^dh_k(i,a)h_k(j,a)
+\sum_{ij} \partial_iv(x)x_jq(a,j,i)
+ 
\sum_i
x_i\,f(i,a)  
\bigg]     =0.
\end{equation}
This will be written as follows: denoting $Dv$ and $D^2v$ the gradient and the Hessian matrix of $v$ we have
$$
\beta v +F(x,Dv,D^2v)=0,
$$
where, for $x\in D$, $p\in\R^N$ and $X\in \cals(N) $ (the set of symmetric real $N\times N$ matrices)
$$
F(x,p,X)= -\sup_{a\in A}\bigg[
\frac{1}{2}\hbox{Trace}\Big(\Sigma(x,a )\Sigma(x,a)^TX\Big) +\langle p, b(x,a)\rangle + f(x,a)
\bigg]  
$$
where the $N\times d$ matrix
$\Sigma(x,a )$, the vector $b(x,a)$
and the real function $f(x,a)$ are
$$
\Sigma(x,a )=
 (x_i\,h_k(i,a))_{ik},
 \quad
 b(x,a)=\Big( \sum_j x_jq(a,j,i)\Big)_i, 
 \quad
 f(x,a)= \sum_i
x_i\,f(i,a) ,
$$
for $i=1,\ldots,N$, $k=1,\dots,d $.
Recall that we assume (see \eqref{coeffbdd})
$$
|h_k(i,a)| + |q(a,i,j)| + |f(i,a)|
 \le K_0,
\qquad a\in A;\,k=1,\ldots,d;\, i,j=1,\ldots,N
$$
for some constant $K_0$. Therefore $\Sigma(x,a)$, $b(x,a)$, $f(x,a)$ are Lipschitz continuous in $x$, uniformly in $a$, with a Lipschitz constant that only depends on $K_0,N,d$.
An easy computation shows that 
\begin{equation}\label{FLip}
    |F(x,p,X)-F(x,q,Y)|\le C_0(|x|\,|p-q|+ |x|^2\,|X-Y|)
\end{equation}
for every $x\in D$, $p,q\in \R^N$, $X,Y\in \cals(N)$ and for some constant $C_0$  depending   only  on $K_0,N,d$. We also note
that $F$ is a continuous function of all its arguments.

Let us briefly recall the definition of viscosity sub-/supersolutions.  We find it convenient to write it using sub- and superjets. The equivalence with the other definition based on the use of test functions is well known and can be found e.g. in \cite{CrIsLi1992}, \cite{FlSobook}. 

For $u:D\to \R$ and $x\in D$, the superjet $J ^{2,+}  u( x)$ is the set of pairs $(p,X)\in \R^N\times \cals(N)$ such that
$$
u(y)\le u(x) + \langle p,y-x\rangle + \frac{1}{2}\langle X(y-x),y-x\rangle + o(|y-x|^2)
\text{  as  } y\in D, y\to x.
$$
The closure $\bar J ^{2,+}  u( x)$ consists of the pairs $(p,X)$ such that there exists a sequence $(x_n,p_n,X_n) \in D\times \R^d\times \cals(N)$ such that $x_n\to x$, $p_n\to p$, $X_n\to X$, $u(x_n)\to u(x)$, $(p_n,X_n)\in J ^{2,+}  u( x_n)$. We define subjets setting $J ^{2,-}  u( x)=-J ^{2,+}  (-u)( x)$,  $\bar J ^{2,-}  u( x)=-\bar J ^{2,+}  (-u)( x)$. 

We say that an upper semicontinuous function $u:D\to \R$ is a viscosity subsolution if for any $x\in D$
$$
(p,X)\in \bar J ^{2,+}  u( x)\quad \Longrightarrow\quad
\beta u(x)+F(x,p,X)\le 0. 
$$
A lower semicontinuous function  $v:D\to \R$ is called a viscosity supersolution if for any $x\in D$
$$
(p,X)\in \bar J ^{2,-}  v( x)\quad \Longrightarrow\quad
\beta v(x)+F(x,p,X)\ge 0. 
$$
Finally, a viscosity solution is both a sub- and supersolution.

The main result of this section is the following comparison result, which immediately implies uniqueness of bounded viscosity solution to the HJB equations and allows to prove the characterization result for the value function.

\begin{Theorem}
    Let $u$ be an upper semicontinuous  subsolution bounded above,   $v$ a lower semicontinuous supersolution bounded below. Then $u\le v$.
\end{Theorem}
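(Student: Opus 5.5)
The plan is the standard doubling-of-variables argument of Crandall--Ishii--Lions, adapted to the unbounded closed domain $D$. The difficulty of unboundedness is handled by a logarithmic penalization, chosen because the coefficients $\Sigma,b$ grow linearly in $x$. No boundary condition is needed on $\partial D$: the jets $J^{2,\pm}$ are defined relative to $D$, and the Crandall--Ishii--Lions theorem on sums (Ishii's lemma) holds on locally compact subsets of $\R^N$ such as $D$. So the boundary points are treated exactly like interior points.

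\emph{Step 1: penalization.} Let $\psi(x)=\log(1+|x|^2)$. Then $|Dψ(x)|\le 2|x|/(1+|x|^2)$ and $|D^2\psi(x)|\le C/(1+|x|^2)$, so $|x|\,|D\psi|+|x|^2|D^2\psi|\le C_1$ on $D$. For $\varepsilon>0$ set $u_\varepsilon=u-\varepsilon\psi$. If $(p,X)\in\bar J^{2,+}u_\varepsilon(x)$, then $(p+\varepsilon D\psi(x),X+\varepsilon D^2\psi(x))\in\bar J^{2,+}u(x)$. The subsolution property together with \eqref{FLip} then gives
$$\beta u_\varepsilon(x)+F(x,p,X)\le -\beta\varepsilon\psi(x)+C_0C_1\varepsilon\le C_0C_1\varepsilon .$$
Hence $u_\varepsilon$ is a subsolution up to an error $C\varepsilon$. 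Since $u$ is bounded above, $v$ is bounded below and $\psi\to\infty$, we get $u_\varepsilon(x)-v(x)\to-\infty$ as $|x|\to\infty$. It suffices to show $\sup_D(u_\varepsilon-v)\le C\varepsilon/\beta$ and then let $\varepsilon\to0$.

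\emph{Step 2: doubling.} Suppose instead that $\sup_D(u_\varepsilon-v)=m>C\varepsilon/\beta$. For $\delta>0$ consider
$$\Phi_\delta(x,y)=u_\varepsilon(x)-v(y)-\frac{|x-y|^2}{2\delta}$$
on $D\times D$. It is upper semicontinuous and tends to $-\infty$ at infinity, uniformly because of the growth of $\psi$ and the one-sided bounds on $u$ and $v$. Hence it attains a maximum at some $(x_\delta,y_\delta)$ lying in a compact set independent of $\delta$. The standard argument (Lemma 3.1 in \cite{CrIsLi1992}) yields $|x_\delta-y_\delta|^2/\delta\to0$ and $u_\varepsilon(x_\delta)-v(y_\delta)\to m$ along a subsequence. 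Ishii's lemma provides $X,Y\in\cals(N)$ with $(p_\delta,X)\in\bar J^{2,+}u_\varepsilon(x_\delta)$ and $(p_\delta,Y)\in\bar J^{2,-}v(y_\delta)$, where $p_\delta=(x_\delta-y_\delta)/\delta$, together with the matrix inequality
$$\begin{pmatrix}X&0\\0&-Y\end{pmatrix}\le\frac{3}{\delta}\begin{pmatrix}I&-I\\-I&I\end{pmatrix}.$$

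\emph{Step 3: structural estimate.} This is the step I expect to be the main one, though it is classical. For fixed $a$, the matrix inequality gives
$$\mathrm{Tr}(\Sigma\Sigma^T(x,a)X)-\mathrm{Tr}(\Sigma\Sigma^T(y,a)Y)\le\frac{3}{\delta}|\Sigma(x,a)-\Sigma(y,a)|^2\le\frac{C}{\delta}|x-y|^2,$$
using that $\Sigma$ is Lipschitz uniformly in $a$. The drift and reward terms are handled by the same uniform Lipschitz property: $|\langle p_\delta,b(x,a)-b(y,a)\rangle|\le C|x-y|^2/\delta$ and $|f(x,a)-f(y,a)|\le C|x-y|$. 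Taking the supremum over $a$ gives
$$F(y_\delta,p_\delta,Y)-F(x_\delta,p_\delta,X)\le C\Big(\frac{|x_\delta-y_\delta|^2}{\delta}+|x_\delta-y_\delta|\Big).$$

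\emph{Step 4: conclusion.} Subtract the supersolution inequality for $v$ at $y_\delta$ from the approximate subsolution inequality for $u_\varepsilon$ at $x_\delta$. This gives
$$\beta\big(u_\varepsilon(x_\delta)-v(y_\delta)\big)\le C\varepsilon+o(1)\qquad\text{as }\delta\to0.$$
Letting $\delta\to0$ yields $\beta m\le C\varepsilon$, a contradiction. Therefore $u-\varepsilon\psi\le v+C\varepsilon/\beta$ on $D$ for every $\varepsilon>0$, and letting $\varepsilon\to0$ gives $u\le v$.
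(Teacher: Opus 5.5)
Your proof is correct and follows essentially the same route as the paper: logarithmic penalization, doubling of variables on $D\times D$, the Crandall--Ishii lemma relative to the locally compact set $D$, and the structural estimate from the Lipschitz continuity of $\Sigma$, $b$, $f$ uniformly in $a$. The only difference is cosmetic. You penalize only $u$ with $\varepsilon\log(1+|x|^2)$ and carry the resulting $O(\varepsilon)$ error to the end, whereas the paper penalizes both variables with $\delta\log(\gamma+|x|^2)$ and takes $\gamma$ large so that the terms $\beta\delta\log\gamma$ absorb that error.
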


{\bf Proof.}
Define, for $x,y\in D$, $\alpha>0$, $\delta>0$,
$$
\Phi(x,y)=u(x)-v(y)-\frac{\alpha}{2}|x-y|^2- \delta\log (\gamma + |x|^2)
- \delta\log (\gamma + |y|^2).
$$
Here $\alpha$ will eventually tend to $\infty$, $\delta$ to $0$ and $\gamma>0$ will be fixed later, sufficiently large.
By the boundedness assumption there exists a maximum point $(\hat x,\hat y)\in D\times D$. $\Phi,\hat x,\hat y$ depend on $\alpha, \delta,\gamma$ but we omit this dependence in the notation. By standard arguments (see e.g. \cite{CrIsLi1992} Lemma 3.1 or Proposition 3.7) for fixed $\delta,\gamma$ we have
\begin{equation}
    \label{alphaediffi}
\alpha|\hat x-\hat y|^2\to0 , \quad |\hat x-\hat y|\to0
\end{equation}
as $ \alpha\to\infty$. 
For every $x\in D$ we have
$$
\Phi(\hat x,\hat y)\ge \Phi(x,x)= u(x)-v(x) -2 \delta\log (\gamma + |x|^2),
$$
so, letting
$$\theta_\delta=
\sup_{x\in D} (u(x)-v(x) -2 \delta\log (\gamma + |x|^2)),
$$ 
we have $ \Phi(\hat x,\hat y)\ge \theta_\delta$, which implies in particular 
\begin{equation}
    \label{phiethetadelta}
    \theta_\delta + \delta\log (\gamma + |\hat x|^2)
+ \delta\log (\gamma + |\hat y|^2)
\le
u(\hat x)-v(\hat y).
\end{equation}
We note that $\theta_\delta$ is decreasing in $\delta>0$. We claim that $\lim_{\delta \to 0}\theta_\delta\le 0$, so that   for every $\delta$ sufficiently small we have 
$ u(x)-v(x) -2 \delta\log (\gamma + |x|^2)\le 0$ for every $x\in D$ and letting $\delta\to 0$ we obtain the desired conclusion $u\le v$. To prove the claim we will show    that assuming $\lim_{\delta \to 0}\theta_\delta\in (0,\infty]$ leads to a contradiction.

Let us define
$$
g(x)= \log (\gamma + |x|^2),
\qquad
\tilde u(x)= u(x)-\delta g(x),
\qquad
\tilde v(y)= v(y)+\delta g(y).
$$
Then $(\hat x,\hat y)$ is a maximum point of $ \tilde u(x)-\tilde v(y)  -\frac{\alpha}{2}|x-y|^2$. By the Crandall-Ishii Lemma (see \cite{CrIs1990}, or \cite{CrIsLi1992} Theorem 3.2 and the discussion that follows) there exist $X,Y\in\cals(N)$ such that
\begin{equation}
    \label{getti}
(\alpha(\hat x-\hat y),X)\in \bar J ^{2,+}\tilde u(\hat x),
\quad
(\alpha(\hat x-\hat y),Y)\in \bar J ^{2,-}\tilde v(\hat y),
\quad 
\left(
\begin{array}{cc}
    X & 0 \\
    0 & -Y
\end{array}\right)\le 
3\alpha\left(
\begin{array}{cc}
    I & -I \\
    -I & I
\end{array}\right).
\end{equation}
Since $g$ is smooth, it follows that
$$
\Big(\alpha(\hat x-\hat y)+\delta Dg(\hat x),X+\delta D^2g(\hat x)\Big)\in \bar J ^{2,+}  u(\hat x),
\quad
\Big(\alpha(\hat x-\hat y)-\delta Dg(\hat y),Y-\delta D^2g(\hat y)\Big)\in \bar J ^{2,-}  v(\hat y)
$$
and since $u,-v$ are subsolutions,
$$
\beta u(\hat x)+F\Big(\hat x,\alpha(\hat x-\hat y)+\delta Dg(\hat x),X+\delta D^2g(\hat x)\Big)\le 0,
\quad
\beta v(\hat y)+F
\Big(\hat y,\alpha(\hat x-\hat y)-\delta Dg(\hat y),Y-\delta D^2g(\hat y)\Big)\ge 0.
$$
Subtracting these inequalities and recalling 
 \eqref{phiethetadelta} we obtain
 \begin{align*}
     &
   \beta \theta_\delta +\beta \delta\log (\gamma + |\hat x|^2)
+\beta \delta\log (\gamma + |\hat y|^2)
\\&
\le F
\Big(\hat y,\alpha(\hat x-\hat y)-\delta Dg(\hat y),Y-\delta D^2g(\hat y)\Big)
-
F\Big(\hat x,\alpha(\hat x-\hat y)+\delta Dg(\hat x),X+\delta D^2g(\hat x)\Big).
 \end{align*}
 Using \eqref{FLip}, the right-hand side can be estimated by
 $$ 
F
\Big(\hat y,\alpha(\hat x-\hat y) ,Y \Big)
-
F\Big(\hat x,\alpha(\hat x-\hat y) ,X \Big)+ 
\delta   C_0( |\hat y|\,| Dg(\hat y)|+ |\hat y|^2\,|D^2g(\hat y)|) 
+
\delta   C_0( |\hat x|\,| Dg(\hat x)|+ |\hat x|^2\,|D^2g(\hat x)|) .
$$
By explicit computations,
$$
Dg( x)= \frac{2x}{\gamma + |x|^2},
\qquad
D^2g( x)= \frac{2I}{\gamma + |x|^2}-
\frac{4x\otimes x}{(\gamma + |x|^2)^2},
$$
so that
$$
 C_0( | x|\,| Dg( x)|+ |x|^2\,|D^2g( x)|)\le
 C_1
$$
where $C_1$ is another constant    depending   only  on $K_0,N,d$ (and not on $\gamma>0$). It follows that
$$
 \beta \theta_\delta +\beta \delta\log (\gamma + |\hat x|^2)
+\beta \delta\log (\gamma + |\hat y|^2)\le
F
\Big(\hat y,\alpha(\hat x-\hat y) ,Y \Big)
-
F\Big(\hat x,\alpha(\hat x-\hat y) ,X \Big)+ 2
\delta C_1.
$$
Choosing $\gamma>0$ so large that $\beta\log\gamma \le C_1$ we arrive at
$$
 \beta \theta_\delta  \le
F
\Big(\hat y,\alpha(\hat x-\hat y) ,Y \Big)
-
F\Big(\hat x,\alpha(\hat x-\hat y) ,X \Big) .
$$
It is well known (see \cite{CrIsLi1992} Example 3.6) that the right-hand side can be estimated as follows:
$$
 \beta \theta_\delta  \le \omega(
 \alpha|\hat x-\hat y|^2 + |\hat x-\hat y|)
$$
for a modulus $\omega$ (i.e. a function $\omega:[0,\infty)\to [0,\infty)$ such that $\omega(0+)=0$) that only depends on the Lipschitz constants of  $\Sigma $, $b $, $f $, hence only  on $K_0,N,d$. Letting $\alpha\to \infty$ and recalling \eqref{alphaediffi} we have then
$\beta \theta_\delta  \le 0 $ which leads to a contradiction with the assumption that $\beta>0$ and $\lim_{\delta\to 0}\theta_\delta>0$.
\qed

This was the main step to the following result that summarizes the main conclusions on the control problem

\begin{Theorem}
Suppose   that Assumptions \ref{settingprobabgen} and \ref{ipcoeffpb} hold  and that the coefficients do not depend on time. Then the   value function $V$ for the problem \eqref{pfHinf} is the unique bounded viscosity solution of the HJB equation \eqref{HJBellittca}.
\end{Theorem}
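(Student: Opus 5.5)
Uniqueness follows directly from the comparison theorem just proved. If $v_1,v_2$ are two bounded viscosity solutions, then $v_1$ is an upper semicontinuous subsolution bounded above and $v_2$ a lower semicontinuous supersolution bounded below, so $v_1\le v_2$; exchanging roles gives equality. It therefore remains to show that $V$ is a bounded viscosity solution.

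Boundedness is \eqref{Vbdd} in Proposition \ref{investimeV}. Continuity, which is needed for $V$ to be both upper and lower semicontinuous, follows from convexity: a finite convex function on $\R^N$ is locally Lipschitz on the interior of $D$. At the boundary of $D$ convexity alone is not enough, so I will instead prove continuity directly. For solutions $\rho,\bar\rho$ of \eqref{pfHinf} starting at $x,\bar x$, linearity gives $\E|\rho_t-\bar\rho_t|\le e^{Ct}|x-\bar x|$ with $C$ depending only on $K_0,N,d$. Hence, for any $T_0>0$,
$$
|J_\infty(x,\alpha)-J_\infty(\bar x,\alpha)|\le K_0\sqrt N\int_0^{T_0}e^{(C-\beta)t}\,dt\,|x-\bar x|+\frac{2\sup|f|}{\beta}e^{-\beta T_0},
$$
where the tail term comes from \eqref{Jinftybar}, i.e. from working under the physical probability. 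Choosing $T_0$ large and then $|x-\bar x|$ small gives continuity of $V$, uniformly in $\alpha$.

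Next I will establish the dynamic programming principle: for every $x\in D$ and every $\F^W$-stopping time (or deterministic time) $\tau$,
$$
V(x)=\sup_{\alpha\in\cala}\E\Big[\int_0^{\tau}e^{-\beta t}\sum_i\rho_t^if(i,\alpha_t)\,dt+e^{-\beta\tau}V(\rho_\tau)\Big].
$$
The separated problem is a standard fully observed controlled diffusion with Lipschitz coefficients, linear growth, and a Brownian filtration, so I will use the standard proof, for instance as in Fleming--Soner \cite{FlSobook} or via the weak formulation. The inequality "$\le$" uses conditioning on $\calf^W_\tau$ and the fact that the shifted control is admissible for the shifted Brownian motion. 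The inequality "$\ge$" uses a measurable selection of $\varepsilon$-optimal controls on a countable partition of $D$, combined with the continuity of $V$ just proved. This step is the main technical obstacle. A convenient shortcut is to note that the problem fits exactly the frameworks of \cite{FlSobook}, Chapter IV, or Yong--Zhou, where the state space $\R^N$ can be used because the equation is well posed for all $x\in\R^N$ and $D$ is invariant by Proposition \ref{investimeV}.

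From the dynamic programming principle, the viscosity sub- and supersolution properties follow in the usual way. Take a smooth $\varphi$ touching $V$ from above (resp. below) at $x\in D$, and apply Itô's formula to $e^{-\beta t}\varphi(\rho_t)$ on a short interval $[0,h]$. For the supersolution inequality, use constant controls $\alpha\equiv a$ and let $h\to0$, which gives $\beta V(x)+F(x,D\varphi,D^2\varphi)\ge0$. For the subsolution inequality, argue by contradiction using the uniform continuity in $a$ of the bracket in \eqref{HJBellittca}. This holds because its dependence on $a$ enters only through the bounded coefficients multiplied by $x$, $D\varphi$, $D^2\varphi$, which are locally bounded, so the supremum can be controlled along trajectories staying near $x$. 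Since test functions are only required to touch on $D$ and $D$ is invariant for the dynamics, no boundary condition arises, and the definitions via jets on $D$ coincide with the test-function formulation. Together with the comparison theorem this proves the statement.
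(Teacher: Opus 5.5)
There is a genuine gap here, and it is one the paper's own proof shares. Your outline is the same as the paper's: uniqueness from the comparison theorem, boundedness from \eqref{Vbdd}, and the viscosity property from the dynamic programming principle via standard references. Your added details on the DPP and on deriving the sub/supersolution inequalities are the usual ones. One wording slip: the subsolution step needs continuity of the bracket in $x$ uniformly in $a$, not continuity in $a$, since the coefficients are only Borel in $a$.

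\textbf{The failing step.} ``Boundedness is \eqref{Vbdd}'' is false for general $x\in D$. The state equation and reward in \eqref{pfHinf} are linear in the initial datum, and $\cala$ does not depend on $x$. Hence $J_\infty(cx,\alpha)=c\,J_\infty(x,\alpha)$ and $V(cx)=cV(x)$ for $c>0$, so $V$ is bounded on $D$ only if $V\equiv0$. For a concrete case: the rows of $Q^a$ sum to zero, so $\sum_i\rho^i_t$ is a $\P$-martingale. With $f\equiv1$ this gives $V(x)=\beta^{-1}\sum_ix_i$, which is an unbounded classical solution of \eqref{HJBellittca}.

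\textbf{Why \eqref{Vbdd} fails.} The representation \eqref{Jinftybar} under the physical probability holds only when $x$ is the law of $X_0$, i.e.\ $\sum_ix_i=1$. For general $x\in D$ it gives only $|V(x)|\le\beta^{-1}\sup|f|\sum_ix_i$.

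Your tail bound $\frac{2\sup|f|}{\beta}e^{-\beta T_0}$ has the same defect. There it is harmless: multiply it by $\sum_ix_i+\sum_i\bar x_i$. In fact, splitting $x-\bar x$ into positive and negative parts gives $|V(x)-V(\bar x)|\le\beta^{-1}\sup|f|\sum_i|x_i-\bar x_i|$.

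The boundedness step, however, cannot be repaired. $V$ lies outside the class covered by the comparison theorem, whose penalization $\delta\log(\gamma+|x|^2)$ cannot compensate linear growth. So the statement as written is false in general, and no argument along these lines can establish it.

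\textbf{What can be proved.} Your argument does show that $V$ is the unique viscosity solution with linear growth, once the comparison theorem is upgraded to that class. To do so, redo the doubling of variables with the penalization $\delta\psi(x)+\delta\psi(y)$, where $\psi(x)=(1+\sum_ix_i)^m$ and $m>1$.
\begin{itemize}
\item The first-order part of $\call^a\psi$ (the operator of Section~\ref{sec:verif}) vanishes, because $\sum_iq(a,j,i)=0$.
\item The second-order part is at most $\frac{m(m-1)}{2}K_0^2\psi$, so $\sup_a\call^a\psi<\beta\psi$ for $m$ close to $1$.
\item This absorbs the extra terms $\delta\sup_a\call^a\psi(\hat x)+\delta\sup_a\call^a\psi(\hat y)$ that arise when $\delta D\psi$ and $\delta D^2\psi$ are moved out of $F$. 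You obtain $\beta\theta_\delta\le\omega(\alpha|\hat x-\hat y|^2+|\hat x-\hat y|)$ with $\theta_\delta=\sup_D(u-v-2\delta\psi)$.
\item Since $\psi$ is smooth near $D$ and superlinear on $D$, the doubled function attains its maximum for sub/supersolutions of linear growth.
\end{itemize}
The rest of the paper's argument then goes through unchanged.
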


{\bf Proof.}  Boundedness of $V$ was proved in \eqref{Vbdd} and  uniqueness follows from the previous result. Under our assumptions it is well known that $V$ satisfies a dynamic programming principle and that it is a viscosity solution: see e.g. 
\cite{Phbook09}
or 
\cite{FlSobook}.
\qed

\section{Dynamic programming equation for finite horizon: viscosity theory}
\label{sechjbparab}

In this section we study the value function $V$ for the problem \eqref{pfHf}. We still suppose that  Assumptions \ref{settingprobabgen} and \ref{ipcoeffpb} hold. 
We will show that $V$ is the unique viscosity solution to the  
 HJB equation. Here this is an equation for a function
$v(t,x)=v(t,x_1,\ldots,x_N)$ on the domain $(0,T)\times D$ of the form
\begin{equation}
    \label{HJBparabolica}
- v_t(t,x)   -\sup_{a\in A}\bigg[
\frac{1}{2}\sum_{ij}\partial_{ij}^2v(t,x )\,
x_ix_j\sum_{k=1}^dh_k(i,a,t)h_k(j,a,t)
+\sum_{ij} \partial_iv(t,x)x_jq(a,t,j,i)
+ 
\sum_i
x_i\,f(i,a,t) 
\bigg]     =0,
\end{equation}
with the boundary condition
\begin{equation}\label{HJBparabolicaterminal}
 v(T,x)= \sum_i
x_i\,g(i), \qquad x\in D.   
\end{equation}
 Denoting $Dv$ and $D^2v$ the gradient and the Hessian matrix of $v$ with respect to $x$, we have
$$
-v_t +F(t,x,Dv,D^2v)=0,
$$
where $F$ is defined 
for $t\in [0,T]$, $x\in D$, $p\in\R^N$ and $X\in \cals(N) $  by
$$
F(t,x,p,X)= -\sup_{a\in A}\bigg[
\frac{1}{2}\hbox{Trace}\Big(\Sigma(x,a,t )\Sigma(x,a,t)^TX\Big) +\langle p, b(x,a,t)\rangle + f(x,a,t)
\bigg]  
$$
$\Sigma(x,a,t )$, $b(x,a,t)$
and   $f(x,a,t)$
are defined similarly as before,
but possibly depending on $t$.
The inequality 
\eqref{FLip} still holds for every $t$, with the same constant $C_0$. We will assume that $h,q,f$ are continuous in $t\in [0,T]$ uniformly in $a\in A$, so that $F$ is a continuous function of all its arguments.

We report the standard definitions of viscosity sub- and supersolution using parabolic sub/superjets. 

For $u:(0,T)\times D\to \R$, $t\in (0,T)$, $x\in D$, the parabolic superjet $P ^{2,+}  u( t,x)$ is the set of triples $(a,p,X)\in \R\times \R^N\times \cals(N)$ such that
$$
u(s,y)\le  u(t,x) + a(s-t)+\langle p,y-x\rangle + \frac{1}{2}\langle X(y-x),y-x\rangle + o(|y-x|^2+|s-t|)
$$
 as  $ y\in D, y\to x$,  $s\in (0,T),s\to t$.
The closure $\bar P ^{2,+}  u(t, x)$ consists of the triples $(a,p,X)$ such that there exists a sequence $(a_n,x_n,p_n,X_n) \in (0,T)\times D\times \R^d\times \cals(N)$ such that $a_n\to a,$ $x_n\to x$, $p_n\to p$, $X_n\to X$, $u(t_n ,x_n)\to u(t ,x)$, 
$(a_n,p_n,X_n)\in J ^{2,+}  u(t_n ,x_n)$. We define subjets setting $P ^{2,-}  u( t,x)=-P ^{2,+}  (-u)( t,x)$,  $\bar P ^{2,-}  u(t, x)=-\bar P ^{2,+}  (-u)( t,x)$. 

We say that an upper semicontinuous function $u:(0,T]\times D\to \R$ is a viscosity subsolution if for any $t\in (0,T)$, $x\in D$
$$
(a,p,X)\in \bar P ^{2,+}  u(t, x)\quad \Longrightarrow\quad
a+F(t,x,p,X)\le 0,
$$
and moreover $u(T,x)\le g(x)$ for $x\in D$.
A lower semicontinuous function  $v:(0,T]\times D\to \R$ is called a viscosity supersolution if for any $t\in (0,T]$, $x\in D$
$$
(a,p,X)\in \bar P ^{2,-}  v( t,x)\quad \Longrightarrow\quad
a+F(t,x,p,X)\ge 0, 
$$
and moreover $v(T,x)\ge g(x)$ for $x\in D$.
Finally, a viscosity solution is both a sub- and supersolution.

We first prove the following comparison result.

\begin{Theorem} Suppose that  $h,q,f$ are continuous in $t\in [0,T]$ uniformly in $a\in A$.
    Suppose that $u,v:(0,T]\times D\to \R$ are  upper and lower semicontinuous, respectively. Let u be a  subsolution and   $v$ a  supersolution  satisfying
\begin{equation}
    \label{uvboundary}
    u(T,x)\le  v(T,x)
    \qquad x\in D.
\end{equation}  
    Suppose moreover that there exists a constant $C_1>0$ such that
\begin{equation}
    \label{uvlineargrowth}
    u(t,x)\le C_1(1+|x|),
    \; \;v(t,x)\ge -C_1(1+|x|),
    \qquad 
    t\in (0,T], x\in D.
\end{equation}
     Then $u\le v$ on $(0,T]\times D$.
\end{Theorem}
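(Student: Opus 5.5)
We follow the strategy of the elliptic comparison theorem above, adapted to the parabolic setting and to linear growth. The two new features are the missing zeroth-order term $\beta v$ and the linear growth allowed in \eqref{uvlineargrowth}. We handle both with a time-exponential factor and a penalization that grows faster than linearly. First we reduce to a strict subsolution with a coercive penalization. For $\lambda>0$ (fixed later) and $\delta>0$ set
$$
\tilde u(t,x)=u(t,x)-\delta e^{\lambda(T-t)}(1+|x|^2)^{1/2}\cdot 0 -\delta e^{\lambda (T-t)}\psi(x),\qquad \psi(x)=(1+|x|^2),
$$
or more simply $\tilde u=u-\delta e^{\lambda(T-t)}\psi(x)-\frac{\delta}{t}$. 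Since $u$ grows at most linearly and $\psi$ quadratically, $\tilde u-v$ tends to $-\infty$ as $|x|\to\infty$ and as $t\to0^+$. By \eqref{FLip}, with $|D\psi|\le 2|x|$ and $|D^2\psi|\le 2$, we get $|F(t,x,p+\delta e^{\lambda(T-t)}D\psi,X+\delta e^{\lambda(T-t)}D^2\psi)-F(t,x,p,X)|\le C\delta e^{\lambda(T-t)}(1+|x|^2)$. The time derivative of the correction contributes $\lambda\delta e^{\lambda(T-t)}\psi+\delta/t^2$. Choosing $\lambda>C$ then makes $\tilde u$ a strict subsolution, $a+F\le -\delta/T^2<0$. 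It suffices to show $\tilde u\le v$ and then let $\delta\to0$.

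Next, assume by contradiction that $\sup(\tilde u-v)=\theta>0$. The supremum is then attained on a compact subset of $(0,T]\times D$, by the coercivity just obtained and upper semicontinuity. We double variables only in space, keeping a common time variable:
$$
\Phi(t,x,y)=\tilde u(t,x)-v(t,y)-\frac{\alpha}{2}|x-y|^2 .
$$
The maxima $(\hat t,\hat x,\hat y)$ stay in a fixed compact set, and as $\alpha\to\infty$ we have $\alpha|\hat x-\hat y|^2\to0$ (\cite{CrIsLi1992} Lemma 3.1). If $\hat t=T$ along a subsequence, the limit points satisfy $\tilde u(T,\bar x)-v(T,\bar x)\ge\theta>0$. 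This contradicts \eqref{uvboundary}, since $\tilde u\le u$, and uses upper semicontinuity of $\tilde u-v$. So $\hat t<T$, and the parabolic Crandall--Ishii lemma (\cite{CrIsLi1992} Theorem 8.3) gives $(a,\alpha(\hat x-\hat y),X)\in\bar P^{2,+}\tilde u(\hat t,\hat x)$ and $(a,\alpha(\hat x-\hat y),Y)\in\bar P^{2,-}v(\hat t,\hat y)$ with the same $a$, where $X,Y$ satisfy the matrix inequality in \eqref{getti}.

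Subtracting the two viscosity inequalities eliminates $a$ and gives
$$
\frac{\delta}{T^2}\le F(\hat t,\hat y,\alpha(\hat x-\hat y),Y)-F(\hat t,\hat x,\alpha(\hat x-\hat y),X).
$$
As in the elliptic proof, \cite{CrIsLi1992} Example 3.6 bounds the right-hand side by $\omega(\alpha|\hat x-\hat y|^2+|\hat x-\hat y|)$, using Lipschitz continuity of $\Sigma,b,f$ in $x$ uniformly in $a,t$. Letting $\alpha\to\infty$ yields $\delta/T^2\le0$, a contradiction.

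I expect the main obstacle to be the first step: verifying that the time-weighted quadratic penalization really produces a strict subsolution. The coefficients in \eqref{FLip} grow like $|x|$ and $|x|^2$, so the error terms are quadratic in $x$. They must be absorbed by the $\lambda$ term, which explains the choice of $\psi$ and of the factor $e^{\lambda(T-t)}$. A secondary point is that the continuity of $F$ in $t$ is only needed for the common-time doubling, which is why the hypothesis on uniform continuity in $t$ is used.
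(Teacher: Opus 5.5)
Your proof is correct. It reaches the conclusion by a somewhat different route from the paper's.

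In both arguments the crux is absorbing the error of order $\delta(1+|x|^2)$ that a quadratic penalization produces through the growth $|x|\,|p-q|+|x|^2|X-Y|$ in \eqref{FLip}. The paper does this in two steps:
\begin{enumerate}
\item It first proves comparison for the auxiliary equation $-v_t+Kv+F=0$ with $K\ge 2C_0$. There the zeroth-order term, combined with the lower bound \eqref{basicthetabar} on $u(t_\alpha,x_\alpha)-v(t_\alpha,y_\alpha)$ at the maximum point, swallows the terms $2\delta C_0(|x_\alpha|^2+|y_\alpha|^2)$.
\item It then reduces the general case to this one through $u_K=e^{-K(T-t)}u$, $v_K=e^{-K(T-t)}v$. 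This requires checking that these are sub/supersolutions of the transformed equation with $F_K$.
\end{enumerate}
You instead put the exponential weight into the test function, $\delta e^{\lambda(T-t)}(1+|x|^2)$, so that its time derivative absorbs the error and $\tilde u$ becomes a strict subsolution. The inequality at the maximum point is then used only for localization and to exclude $\hat t=T$, and the argument closes with the clean contradiction $\delta/T^2\le 0$.

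Unwinding the paper's Step II (multiply its Step I conclusion by $e^{K(T-t)}$) turns its quadratic penalty into $2\delta e^{K(T-t)}(1+|x|^2)$, which is exactly your form. So your version is a one-step repackaging of the same device that avoids the auxiliary equation. Penalizing only $u$ is harmless: for $\alpha\ge 1$, the term $\frac{\alpha}{2}|x-y|^2$ together with $-v\le C_1(1+|y|)$ keeps $\hat y$ bounded once $\hat x$ is.

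A few small corrections:
\begin{enumerate}
\item The strict inequality should read $-a+F\le-\delta/T^2$. For the backward equation $-v_t+F=0$ the subsolution test is $-a+F\le 0$; this is what the paper's proof actually uses, and the ``$a+F$'' in its definition is a misprint. Only with this sign is the time derivative of your penalty favourable.
\item You should check the extra hypothesis of the parabolic Crandall--Ishii lemma (\cite{CrIsLi1992} Theorem 8.3), namely a local one-sided bound on the time component of the jets. Here it follows at once from the strict subsolution inequality for $\tilde u$ and the supersolution inequality for $v$, since $F$ is bounded on bounded sets; the paper verifies it explicitly.
\item Your closing remark is backwards. Both values of $F$ are taken at the same $\hat t$, so common-time doubling is exactly what makes continuity in $t$ unnecessary in this estimate. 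Continuity in $t$ would be needed only if you doubled the time variable too.
\end{enumerate}
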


{\bf Proof.} {\it Step I.} We will first prove this result for sub/supersolutions to the equation
\begin{align}
    \label{HJBparabmodif}
-v_t+K\,v(t,x) +F(t,x,Dv,D^2v)=0,
\end{align}
where $K>0$ will be taken sufficiently large (in fact, satisfying $K\ge 2C_0$, compare \eqref{FLip}).  The general case will then be reduced to this one.

For $\delta\in (0,1]$ define
$$\theta_\delta=
\sup_{x\in D,0<t\le T} \left[u(t,x)-v(t,x) -2 \delta  (1 + |x|^2)-\frac{\delta}{t}\right].
$$ 
By \eqref{uvlineargrowth} the function in square parenthesis does not exceed 
$2 C_1 (1 + |x|)-2 \delta  (1 + |x|^2)$, so that $\theta_\delta<\infty$. $\theta_\delta$ is decreasing in $\delta$. If $\lim_{\delta\to 0}\theta_\delta\le 0$ then 
$$
u(t,x)-v(t,x) -2 \delta  (1 + |x|^2)-\frac{\delta}{t} \le 0,
\qquad 0<t\le T,x\in D, 0<\delta\le 1
$$
and letting $\delta\to 0$ we have
$u(t,x)\le v(t,x)$ and the conclusion is reached. 

Assume on the contrary that  $\lim_{\delta\to 0}\theta_\delta\in (0,\infty]$: we will see that this leads to a contradiction. 
Take $\bar\theta>0$ and $\delta >0$ such that $\theta_\delta\ge \bar\theta$. From now on we fix   $\delta $ and omit to indicate that  several quantities in the sequel may depend on it.

Define, for $x,y\in D$, $t\in (0,T]$, $\alpha>0$,
$$
\Phi_\alpha (t,x,y)=u(t,x)-v(t,y)- \delta  (1 + |x|^2)
- \delta  (1 + |y|^2)-\frac{\delta}{t}
-\frac{\alpha}{2}|x-y|^2.
$$
Later we will let  $\alpha\to\infty$.
From \eqref{uvlineargrowth} it follows that 
$$
\Phi_\alpha (t,x,y)\le 
C_1(1+|x|)
- \delta  (1 + |x|^2)
+C_1(1+|y|)- \delta  (1 + |y|^2)-\frac{\delta}{t}
$$
and since $\Phi_\alpha$ is upper semicontinuous it achieves a maximum at a point $(t_\alpha,x_\alpha,y_\alpha)\in (0,T]\times D\times D$.
Since
$$
\Phi_\alpha (t_\alpha,x_\alpha,y_\alpha)\ge
\Phi_\alpha (t,x,x)=
u(t,x)-v(t,x) -2 \delta  (1 + |x|^2)-\frac{\delta}{t},
\quad t\in(0,T],x\in D
$$
it follows that $ \Phi_\alpha (t_\alpha,x_\alpha,y_\alpha)\ge \theta_\delta\ge \bar\theta$, namely
\begin{equation}
    \label{basicthetabar}
u(t_\alpha,x_\alpha)-v(t_\alpha,y_\alpha)\ge
\bar\theta + \delta  (1 + |x_\alpha|^2)
+ \delta  (1 + |y_\alpha|^2)+\frac{\delta}{t_\alpha}
+\frac{\alpha}{2}|x_\alpha-y_\alpha|^2.
\end{equation}
Using 
\eqref{uvlineargrowth} once more, we deduce from 
\eqref{basicthetabar}
that 
$$ 
C_1(1+|x_\alpha|)
+C_1(1+|y_\alpha|)
\ge  \delta  (1 + |x_\alpha|^2)
+ \delta  (1 + |y_\alpha|^2)+\frac{\delta}{t_\alpha}
$$
which implies that there exists a constant $C$, independent from $\alpha$, such that
\begin{equation}
    \label{xyalphabdd}
 |x_\alpha|+ |y_\alpha|+\frac{1}{t_\alpha}\le C.
\end{equation}
Moreover, by standard arguments (see e.g. \cite{CrIsLi1992} Lemma 3.1 or Proposition 3.7) we have
\begin{equation}
    \label{alphaediffibis}
\alpha| x_\alpha-  y_\alpha|^2\to0,
\qquad | x_\alpha-  y_\alpha|\to0
\end{equation}
as $ \alpha\to\infty$. 
By \eqref{xyalphabdd} the  family $(x_\alpha,y_\alpha,t_\alpha)_\alpha$ is bounded, so it admits a limit point, necessarily  of the form $(\bar x,\bar x,\bar t)$ by
\eqref{alphaediffibis}. 
\eqref{xyalphabdd} also 
implies that  $\bar t>0$. Suppose that we had $\bar t=T$: then 
letting $\alpha\to\infty$ along a subsequence, by upper semicontinuity it follows from
\eqref{basicthetabar}
and \eqref{alphaediffibis} that
$u(T,\bar x)-v(T, \bar y)\ge
\bar\theta >0$, which contradicts the assumption 
\eqref{uvboundary}.
So we conclude that
$0<\bar t <T$ and it follows  that 
$(x_\alpha,y_\alpha,t_\alpha)\in (0,T)\times D\times D$
for infinitely many $\alpha\to\infty$. 

Next recall that
$(t_\alpha,x_\alpha,y_\alpha)$ was a maximum point of $\Phi_\alpha$, that we rewrite in the form
$$
\Phi_\alpha (t,x,y)=\tilde u(t,x)-\tilde v(t,y)- \varphi_\alpha(t,x,y),
$$
 where we define
$$ 
\tilde u(t,x)= u(t,x)-\delta (1+|x|^2),
\qquad
\tilde v(t, y)= v(y)+\delta (1+|y|^2),
\qquad
\varphi_\alpha(t,x,y)= \frac{\delta}{t}
+\frac{\alpha}{2}|x-y|^2.
$$
Since the quadratic terms are smooth, the parabolic sub/superjets are related as follows:
\begin{align}\label{gettiparab}&
\bar P ^{2,+} \tilde u(t, x)=
\bar P ^{2,+}   u(t, x)+(0, -2\delta x, -2\delta I),
\qquad
\bar P ^{2,-} \tilde v(t, y)=
\bar P ^{2,-}   v(t, y)+(0, 2\delta y, 2\delta I).
\end{align}
We wish to apply the  the Crandall-Ishii Lemma in the parabolic form: see \cite{CrIs1990}, or \cite{CrIsLi1992} Theorem 8.3.
For our equation, which is backward in time, it is convenient to check the required assumptions in the form stated in \cite{FlSobook} Theorem 6.1: we must show that for every $M>0$ there exists a constant $C(M)$ such that
$$(a,p,X)\in \bar P ^{2,+} \tilde u(t, x),\quad  |p|+|x|+|X|+ |\tilde u(t, x)| \le M\quad  \Longrightarrow\quad 
a\ge - C(M),
$$
$$(a,p,X)\in \bar P ^{2,-} \tilde v(t, y),\quad  |p|+|y|+|X|+ |\tilde v(t, y)| \le M\quad  \Longrightarrow\quad 
a\le  C(M).
$$
We check the first implication, the other one being similar. Assume $(a,p,X)\in \bar P ^{2,+} \tilde u(t, x)$. Then by 
\eqref{gettiparab} we have
$$
(a,p+2\delta x,X+2\delta I)\in \bar P ^{2,+} u(t, x)
$$
and since $u$ is a subsolution to 
\eqref{HJBparabmodif} we have
$$
-a + K\,u(t,x) + F(t,x, p+2\delta x,X+2\delta I) \le 0.
$$
Since $\tilde u \le u$, recalling 
\eqref{FLip}  we have
\begin{align*}
    a&\ge   K\,\tilde u(t,x) + F(t,x, p+2\delta x,X+2\delta I) 
    \\&\ge 
    K\,\tilde u(t,x) + 
    F(t,x, p,X) 
    - 2\delta C_0|x|^2
\end{align*}
and if 
$|p|+|x|+|X|+ |\tilde u(t, x)| \le M$
we obtain the required inequality $a\ge -C(M)$ setting 
$$
C(M)= KM +2\delta C_0M^2 + 
\sup\{|F(t,x,p,X)|\,:\, |p|+|x|+|X| \le M,\,t\in [0,T]
\}.
$$
After checking the required assumptions
we can now apply the Crandall-Ishii lemma and conclude that 
there exist $a,b\in\R$, $X,Y\in\cals(N)$ such that
$$
(a,\alpha( x_\alpha-  y_\alpha),X)\in \bar P ^{2,+}\tilde u(t_\alpha, x_\alpha),
\quad
(b,\alpha(  x_\alpha- y_\alpha),Y)\in \bar P ^{2,-}\tilde v(t_\alpha, y_\alpha),
$$
\begin{equation}
   \label{gettiter}
   a-b= (\varphi_\alpha)_t (t_\alpha, x_\alpha, y_\alpha)=
-\frac{\delta}{t_\alpha^2},
   \qquad
\left(
\begin{array}{cc}
    X & 0 \\
    0 & -Y
\end{array}\right)\le 
3\alpha\left(
\begin{array}{cc}
    I & -I \\
    -I & I
\end{array}\right).
\end{equation}
From \eqref{gettiparab} it follows that
$$
(a,\alpha( x_\alpha-  y_\alpha)+2\delta x_\alpha,X)\in \bar P ^{2,+} u(t_\alpha, x_\alpha),
\qquad
(b,\alpha(  x_\alpha- y_\alpha)-2\delta y_\alpha,Y)\in \bar P ^{2,-}v(t_\alpha, y_\alpha),
$$
and since $u,-v$ are subsolutions to 
\eqref{HJBparabmodif},
\begin{align*}&
-a+ K\, u(t_\alpha, x_\alpha)+F\Big(t_\alpha, x_\alpha ,\alpha(x_\alpha- y_\alpha)+2\delta x_\alpha ,X+2\delta I\Big)\le 0,
\\&
-b+K\, v(t_\alpha, y_\alpha)+F
\Big(t_\alpha, y_\alpha,\alpha(x_\alpha- y_\alpha)-2\delta y_\alpha,Y-2\delta I\Big)\ge 0.
\end{align*}
Subtracting these inequalities and recalling 
 the equality in  
   \eqref{gettiter}
 we obtain
 \begin{align*}
     \frac{\delta}{t_\alpha^2}+
  K\, (u(t_\alpha, x_\alpha) -v(t_\alpha, y_\alpha))
  \le
  &
F
\Big( t_\alpha, y_\alpha,\alpha(x_\alpha- y_\alpha)-2\delta y_\alpha,Y-2\delta I\Big)
\\&
-
F\Big(t_\alpha, x_\alpha ,\alpha(x_\alpha- y_\alpha)+2\delta x_\alpha ,X+2\delta I\Big).
 \end{align*}
 Using \eqref{FLip}, the right-hand side can be estimated from above by
 $$ 
F
\Big( t_\alpha, y_\alpha,\alpha(x_\alpha- y_\alpha),Y\Big)
\\
-
F\Big( t_\alpha,x_\alpha ,\alpha(x_\alpha- y_\alpha) ,X\Big)
+ 2\delta C_0|x_\alpha|^2 + 
2\delta C_0|y_\alpha|^2  .
$$
Inequality \eqref{basicthetabar} 
implies
$$
u(t_\alpha,x_\alpha)-v(t_\alpha,y_\alpha)\ge
\bar\theta + \delta  (1 + |x_\alpha|^2)
+ \delta  (1 + |y_\alpha|^2)
$$
so we can estimate the left-hand side from below and arrive at
\begin{align*}
 &   K\,
\bar\theta + K\,\delta  (1 + |x_\alpha|^2)
+K\, \delta  (1 + |y_\alpha|^2)
\\
&\qquad \le
F
\Big( t_\alpha, y_\alpha,\alpha(x_\alpha- y_\alpha),Y\Big)
-
F\Big(t_\alpha, x_\alpha ,\alpha(x_\alpha- y_\alpha) ,X\Big)
+ 2\delta C_0|x_\alpha|^2 + 
2\delta C_0|y_\alpha|^2  .
\end{align*}
Recall that
 $C_0$ was  a  constant    depending   only  on $K_0,N,d$. 
Choosing $K\ge 2C_0$ we obtain
$$ K\,
\bar\theta  \le
F
\Big( t_\alpha, y_\alpha,\alpha(x_\alpha- y_\alpha),Y\Big)
-
F\Big(t_\alpha,x_\alpha ,\alpha(x_\alpha- y_\alpha) ,X\Big).
$$
It is well known (see \cite{CrIsLi1992} Example 3.6) that the right-hand side can be estimated as follows:
$$
K\,
\bar\theta \le \omega(
 \alpha|x_\alpha- y_\alpha|^2 + |x_\alpha- y_\alpha|)
$$
for a modulus $\omega$ (i.e. a function $\omega:[0,\infty)\to [0,\infty)$ such that $\omega(0+)=0$) that only depends on the Lipschitz constants of  $\Sigma $, $b $, $f $, hence only  on $K_0,N,d$. Letting $\alpha\to \infty$ along a subsequence and recalling \eqref{alphaediffibis} we have 
$K\,
\bar\theta  \le 0 $ which is a contradiction.

\medskip
{\it Step II.} Now we consider the general case. For $K>0$ we define
$$
u_K(t,x)=e^{-K(T-t)}u(t,x),\qquad
v_K(t,x)=e^{-K(T-t)}v(t,x).
$$
It is easy to check that $u_K$ and $v_K$ are, respectively, sub- and super-solutions to the equation
$$-v_t+K\,v(t,x) +F_K(t,x,Dv,D^2v)=0,
$$ 
where $F_K$ is defined   by
$$
F_K(t,x,p,X)= -\sup_{a\in A}\bigg[
\frac{1}{2}\hbox{Trace}\Big(\Sigma(x,a,t )\Sigma(x,a,t)^TX\Big) +\langle p, b(x,a,t)\rangle +e^{-K(T-t)} f(x,a,t)
\bigg]  
$$
We note that this equation is of the form
 \eqref{HJBparabmodif}
and it safisfies inequality \eqref{FLip}
with the same constant $C_0$. Therefore the result of Step I applies and taking  $K\ge 2C_0$ we conclude that $u_K\le v_K$ and therefore $u\le v$. 
\qed

As in the infinite horizon case we arrive at the following characterization of the value function.

\begin{Theorem}
Suppose   that Assumptions \ref{settingprobabgen} and \ref{ipcoeffpb} hold  and that   $h,q,f$ are continuous in $t\in [0,T]$ uniformly in $a\in A$. Then the   value function $V$ for the problem \eqref{pfHf} is the unique  viscosity solution of the HJB equation \eqref{HJBparabolica} in the class of functions having with linear growth in $x$ uniformly in $t$.
\end{Theorem}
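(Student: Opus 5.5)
The proof has three ingredients: linear growth of $V$, the fact that $V$ is a viscosity solution with the right terminal condition, and uniqueness from the comparison theorem just proved.

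\emph{Growth.} Proposition \ref{investimeV}, point 2, gives $|V(t,x)|\le C(1+|x|)$ uniformly in $t\in[0,T]$. So $V$ belongs to the class where comparison applies.

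\emph{Terminal condition.} At $t=T$ the state equation in \eqref{pfHf} is trivial and the running reward vanishes. Hence $J_T(T,x,\alpha)=\sum_i x_i g(i)$ for every $\alpha$, and $V(T,x)=\sum_i x_ig(i)$, which is \eqref{HJBparabolicaterminal}.

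\emph{Viscosity property.} I would invoke the standard theory for controlled diffusions, as in \cite{Phbook09} or \cite{FlSobook}. The coefficients $b(x,a,t)$ and $\Sigma(x,a,t)$ are linear in $x$, bounded in $a$, Lipschitz in $x$ uniformly in $(a,t)$, and continuous in $t$ uniformly in $a$. The running reward $f(x,a,t)$ has linear growth. Under these conditions the value function satisfies the dynamic programming principle, and the usual test-function argument shows it is a viscosity solution of \eqref{HJBparabolica}. I expect this to be the main obstacle, for two reasons.

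First, the definitions require semicontinuity, so $V$ must be jointly continuous in $(t,x)$. Lipschitz continuity in $x$ uniformly in $t$ holds by \eqref{Vtlinearlip}. For continuity in $t$, I would first use the dynamic programming principle together with the estimate $\E|\rho_s-x|\le C(1+|x|)|s-t|^{1/2}$, which follows from the bounded coefficients of the linear SDE. Combined with the Lipschitz bound in $x$, and with the fact that the running reward over $[t,s]$ is $O((1+|x|)(s-t))$, this gives $|V(t,x)-V(s,x)|\le C(1+|x|)|s-t|^{1/2}$.

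Second, the domain is the closed cone $D$ rather than all of $\R^N$. Part 1 of Proposition \ref{investimeV} shows $D$ is invariant under the dynamics. The jets were defined with $y\in D$, i.e.\ relative to $D$, so the test-function argument at boundary points only compares $V$ along trajectories staying in $D$. The standard proof therefore goes through without change, and no extra boundary condition is needed.

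\emph{Uniqueness.} Let $w$ be another viscosity solution with linear growth uniformly in $t$. I would apply the comparison theorem twice, each time using equality of the terminal values $\sum_i x_ig(i)$ for \eqref{uvboundary}:
\begin{itemize}
\item with $u=w$ and $v=V$, which gives $w\le V$;
\item with $u=V$ and $v=w$, which gives $V\le w$.
\end{itemize}
Hence $w=V$.
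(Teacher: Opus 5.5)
Your proposal is correct and follows the paper's route. Linear growth comes from the second inequality in \eqref{Vtlinearlip}, the viscosity property from the standard controlled-diffusion theory cited in the paper, and uniqueness from the preceding comparison theorem applied in both directions. Your further remarks (the terminal value $V(T,x)=\sum_i x_i g(i)$, H\"older continuity in $t$ via the dynamic programming principle, and invariance of $D$ so that jets relative to $D$ suffice at boundary points) only make explicit details the paper leaves to those references.
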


{\bf Proof.}  The linear growth condition is the second inequality in \eqref{Vtlinearlip}. Uniqueness follows from the previous result. The fact that $V$ is a viscosity solution is a standard result:  see e.g. 
\cite{Phbook09}
or 
\cite{FlSobook}.
\qed

\section{Dynamic programming equation: verification theorems}
\label{sec:verif}

In general, verification theorems state that if the HJB equation admits a classical solution, and some additional conditions are satisfied, then the solution coincides with the value function and an optimal control admits a feedback form. 
Looking at the HJB equations one may note that the second order part degenerates when $x$ approaches the boundary of $D$. Therefore we will present results where a classical solution is assumed to exist only in the the interior of $D$, denoted
$\overset{\circ}{D}
=\{x=(x_i)\in \R^N\,:\, x_i>0,\, i=1,\ldots,N\}$. The strict positivity result in Proposition \ref{investimeV}-{\it 1} will repeatedly play a role.

In this section we assume that Assumptions \ref{settingprobabgen} and \ref{ipcoeffpb}  are satisfied and we still denote by $V$ the value function of the separated problem. We present two results, for the parabolic and the elliptic case respectively.

\begin{Theorem}\label{verifparab}
Suppose that $v\in C^{1,2}([0,T]\times \overset{\circ}{D})$ satisfies equation 
\eqref{HJBparabolica}  on $[0,T]\times \overset{\circ}{D}$ and  the terminal condition 
\eqref{HJBparabolicaterminal} on $ \overset{\circ}{D}$, and has polynomial growth in $x$ uniformly in $t$. Then $v\ge V$.

Also assume that, for every $(t,x)\in [0,T]\times \overset{\circ}{D}$, the supremum in the equation is achieved  at a point $a=\widehat{a}(t,x)\in A$ for a measurable function $\widehat{a}:[0,T]\times \overset{\circ}{D}\to A $. Assume finally that, for every $t\in[0,T]$ and $x=(x_i)\in\overset{\circ}{D}$, the closed-loop equation
\begin{equation}
\label{closedloop}
\left\{
\begin{array}{rclr}
d\widehat\rho_s^i&=&\displaystyle \sum_j \widehat\rho_s^j \,q(\widehat{a}(s,\widehat{\rho}_s),s,j,i)\,ds
+ \widehat\rho_s^i\,h(i,\widehat{a}(s,\widehat{\rho}_s),s)\,dW_s,
& s\in[ t,T],\, i\in S,
\\
\widehat\rho_t^i&=&x_i,&
\end{array}\right.
\end{equation}
has an $\F^W$-adapted continuous solution  $\widehat\rho$.

Then the control process in feedback form
\[
\widehat{\alpha}_s=\widehat{a}(s,\widehat{\rho}_s), \qquad s\in [t,T],
\]
is optimal and $v$ coincides with the value function $V$. 

In particular, a solution to \eqref{closedloop} exists if, for every $i,j\in S$, the functions
\begin{align}\label{coeffloclip}
    x\mapsto 
    q(\widehat{a}(s,x),s,j,i),
    \qquad
x\mapsto  h(i,\widehat{a}(s,x),s),
\end{align}
are locally Lipschitz on $\overset{\circ}{D}$, uniformly in $s$.
\end{Theorem}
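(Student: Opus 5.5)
The plan is to run the classical verification argument: apply Itô's formula to $v(s,\rho_s)$ along an arbitrary admissible trajectory, localize, and pass to the limit. Fix $(t,x)\in[0,T]\times \overset{\circ}{D}$ and $\alpha\in\cala$, and let $\rho$ be the solution of \eqref{pfHf}. By Proposition \ref{investimeV}-{\it 1}, $\rho_s\in\overset{\circ}{D}$ for all $s\in[t,T]$, $\P$-a.s. Since $v$ is only $C^{1,2}$ on the open set $[0,T]\times\overset{\circ}{D}$, we localize with the stopping times
$$
\tau_n=\inf\{s\ge t:\ \min_i\rho^i_s\le 1/n \ \text{or}\ |\rho_s|\ge n\}\wedge T .
$$
Continuity of paths and strict positivity give $\tau_n\uparrow T$ a.s., and in fact $\tau_n=T$ for $n$ large, a.s.

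Itô's formula on $[t,\tau_n]$ gives
\begin{align*}
v(\tau_n,\rho_{\tau_n}) ={}& v(t,x)+\int_t^{\tau_n}\Big[v_t+\tfrac12\mathrm{Trace}\big(\Sigma\Sigma^T D^2v\big)+\langle Dv,b\rangle\Big](s,\rho_s,\alpha_s)\,ds
\\
&+\int_t^{\tau_n}Dv(s,\rho_s)^T\Sigma(\rho_s,\alpha_s,s)\,dW_s .
\end{align*}
The equation \eqref{HJBparabolica} says that the $ds$-integrand is at most $-f(\rho_s,\alpha_s,s)$. The stochastic integral has zero mean, because its integrand is bounded on $\{s\le\tau_n\}$. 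Taking expectations therefore yields
$$
v(t,x)\ge \E\Big[\int_t^{\tau_n} f(\rho_s,\alpha_s,s)\,ds+v(\tau_n,\rho_{\tau_n})\Big].
$$

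Passing to the limit $n\to\infty$ is the main technical point. The running term is dominated by $K_0\sup_s|\rho_s|$. The terminal term is dominated by $C(1+\sup_s|\rho_s|^m)$, using the polynomial growth of $v$. Because the Wonham equation is linear with bounded coefficients, $\E\sup_{s\in[t,T]}|\rho_s|^p<\infty$ for every $p$, so dominated convergence applies. Pathwise, $\tau_n=T$ eventually, so $v(\tau_n,\rho_{\tau_n})\to v(T,\rho_T)$. Since $\rho_T\in\overset{\circ}{D}$, the terminal condition \eqref{HJBparabolicaterminal} gives $v(T,\rho_T)=\sum_i\rho^i_T g(i)$. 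Hence $v(t,x)\ge J_T(t,x,\alpha)$, and taking the supremum over $\alpha$ gives $v\ge V$. (This is exactly the step where strict positivity is indispensable; no behavior of $v$ near $\partial D$ is needed.)

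For optimality, first check that $\widehat\alpha_s=\widehat a(s,\widehat\rho_s)$ is $\F^W$-predictable: it is a measurable function of a continuous adapted process, so it is progressive. Modifying it on a $ds\otimes d\P$-null set if necessary, or noting that the predictable projection coincides with it $ds$-a.e., which suffices for the equation and the reward, it defines a control in $\cala$. Moreover $\widehat\rho$ is then the solution of \eqref{pfHf} under $\widehat\alpha$, by uniqueness for the linear equation. Repeating the computation above with $\widehat\alpha$, the inequality becomes an equality because the supremum is attained at $\widehat a$. This gives $v(t,x)=J_T(t,x,\widehat\alpha)\le V(t,x)$, so $v=V$ on $[0,T]\times\overset{\circ}{D}$ and $\widehat\alpha$ is optimal.

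For the last claim, assume local Lipschitz continuity of the coefficients in \eqref{coeffloclip}. Then the coefficients of \eqref{closedloop} are locally Lipschitz on $\overset{\circ}{D}$, uniformly in $s$, and grow at most linearly since $q,h$ are bounded. Standard SDE theory therefore gives a unique solution up to the exit time from the sets $\{\min_i x_i>1/n,\ |x|<n\}$. To show that this exit time does not occur before $T$, take any local solution up to the exit time: it solves a linear Wonham equation driven by the bounded predictable control $\widehat a(s,\widehat\rho_s)$. Applying the robust-form bound from the proof of Proposition \ref{investimeV}-{\it 1}, with $a^{ii}$ bounded, shows that the components stay bounded away from $0$ on $[t,T]$ pathwise, and they cannot blow up, by the linear growth. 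So the solution is global, and by continuity it can be extended through $[t,T]$.
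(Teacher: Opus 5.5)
Your proposal is correct and follows essentially the same route as the paper: localization by exit times from compact subsets of $\overset{\circ}{D}$, Itô's formula, and strict positivity from Proposition \ref{investimeV}-{\it 1} to get $\tau_n=T$ eventually. It then uses dominated convergence (you usefully make explicit the moment bound $\E\sup_s|\rho_s|^p<\infty$), equality for the feedback control, and the same positivity argument to rule out exit before $T$ for the closed-loop equation. One small simplification: $\widehat\alpha_s=\widehat a(s,\widehat\rho_s)$ is already $\F^W$-predictable, because the continuous adapted process $\widehat\rho$ is predictable and $\widehat a$ is Borel, so no modification or projection argument is needed.
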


\noindent{\bf Proof.} The argument is classical, but  we sketch a proof in order to show that the behavior of the solution near the boundary of $D$ is irrelevant.
We introduce the controlled Kolmogorov operator
    \[
\call^a v(t,x)   =
\frac{1}{2}\sum_{ij}\partial_{ij}^2v(t,x )\,
x_ix_j\sum_{k=1}^dh_k(i,a,t)h_k(j,a,t)
+\sum_{ij} \partial_iv(t,x)x_jq(a,t,j,i) ,
\]
and we write the HJB equation \eqref{HJBparabolica} in the form
\[
 v_t(t,x)+\sup_{a\in A}\bigg[\call^a v(t,x)   
+ 
\sum_i
x_i\,f(i,a,t) 
\bigg]     =0,
 \qquad 
v(T,x)= \sum_i
x_i\,g(i). 
\]
Let us fix $t\in[0,T]$ and $x\in\overset{\circ}{D}$. Given an arbitrary control process $\alpha\in\cala$ let us   denote by $\rho$ the  corresponding solution to the equation  in  \eqref{pfHf}.  For every integer $k>0$ define the stopping times
\begin{align}
    \label{stopintD}
    T_k=\inf\{s\ge t\,:\,|\rho_s|>k\text{ \;or\; } dist(\rho_s,\partial D)|<1/k\},
\end{align}
where $dist(\cdot,\partial D)$ denotes the distance from the boundary $\partial D$ of $D$.  By the Ito formula we have
\[
v(T\wedge T_k,\rho_{T\wedge T_k})-v(t,x)=\int_t^{T\wedge T_k} [v_t(s,\rho_s)+\call^{\alpha_s}v(s,\rho_s)]\,ds +\int_t^{T\wedge T_k} \sum_{i}\partial_iv(s,\rho_s) \,
\rho_s^i\,h(i,\alpha_s,s)\,dW_s.
\]
By the choice of $T_k$, the processes $\{ \partial_iv(s,\rho_s) \ :\ s\in [t,T\wedge T_k]\}$ are bounded, and the function $h$ is also assumed to be bounded. Therefore, upon taking expectation, the stochastic integral disappears. Summing and substracting terms, after rearrangement we obtain
\begin{align*}
v(t,x)&= \E\int_t^{T\wedge T_k} \Big\{-v_t(s,\rho_s)-\call^{\alpha_s}v(s,\rho_s)-\sum_{i}
\rho_s^i\,f(i,\alpha_s,s)\Big\}\,ds 
\\&\quad +\E\, 
[v(T\wedge T_k,\rho_{T\wedge T_k})]+\E\, \int_t^{T\wedge T_k} \sum_{i}
\rho_s^i\,f(i,\alpha_s,s)\,ds.
\end{align*}
By the strict positivity result in Proposition \ref{investimeV}-{\it 1}, the trajectories of $\rho$ never leave $\overset{\circ}{D}$, a.s. It follows that a.s.
we have $T\wedge T_k= T$ for large $k$. Letting $k\to\infty$ in the last displayed formula, by dominated convergence, the last two terms tend to
\begin{align*}
\E\, 
[v(T,\rho_{T})]+\E\, \int_t^{T} \sum_{i}
\rho_s^i\,f(i,\alpha_s,s)\,ds=J_T(t,x,\alpha).
\end{align*}
By the HJB equation the term $\{\ldots\}$ is nonnegative,  and by  monotone convergence we obtain
\begin{align*}
v(t,x)&= \E\int_t^{T} \Big\{-v_t(s,\rho_s)-\call^{\alpha_s}v(s,\rho_s)-\sum_{i}
\rho_s^i\,f(i,\alpha_s,s)\Big\}\,ds +J_T(t,x,\alpha).
\end{align*}
Since $\{\ldots\}\ge0$ it follows that $v(t,x)\ge J_T(t,x,\alpha)$ for every $\alpha\in\cala$ and therefore $v(t,x)\ge V(t,x)$. When the control $\widehat{\alpha}$ is chosen we have $\{\ldots\}=0$ and it follows that $v(t,x)=J_T(t,x,\widehat{\alpha})$, which shows the optimality of  $\widehat{\alpha}$ and the equality $v(t,x)=V(t,x)$.

To prove the final statement of the Theorem we note that   the closed-loop equation \eqref{closedloop} has coefficients with linear growth in $x$ and, when \eqref{coeffloclip} holds, also locally Lipschitz, and therefore a unique solution exists up to the stopping times $T_k\wedge T$. As noted above, by Proposition \ref{investimeV}-{\it 1}, a.s. we have $T\wedge T_k= T$ for large $k$,  so that the solution exists on the whole interval $[0,T]$. 
\qed

\begin{Theorem}
Suppose that the coefficients $q$, $h_k$, $f$ do not depend on time and that $v\in C^{2}( \overset{\circ}{D})$ satisfies equation 
\eqref{HJBellittca} on $\overset{\circ}{D}$. Suppose that for every controlled trajectory $\rho$ starting at $x\in \overset{\circ}{D}$ we have 
\[
\lim_{T\to\infty} e^{-\beta T}\E\,[v(\rho_T)]=0.
\]
 Then $v\ge V$.

Also assume that, for every $x\in  \overset{\circ}{D}$, the supremum in the equation is achieved  at a point $a=\widehat{a}(x)\in A$ for a measurable function $\widehat{a}: \overset{\circ}{D}\to A $. Assume finally that for every $x=(x_i)\in\overset{\circ}{D}$, the closed-loop equation
\begin{equation}
\label{closedloopellittica}
\left\{
\begin{array}{rclr}
d\widehat\rho_s^i&=&\displaystyle \sum_j \widehat\rho_s^j \,q(\widehat{a}(\widehat{\rho}_s),j,i)\,ds
+ \widehat\rho_s^i\,h(i,\widehat{a}(\widehat{\rho}_s))\,dW_s,
& s\ge 0,\, i\in S,
\\
\widehat\rho_0^i&=&x_i,&
\end{array}\right.
\end{equation}
has an $\F^W$-adapted continuous solution  $\widehat\rho$.

Then the control process
\[
\widehat{\alpha}_s=\widehat{a}(\widehat{\rho}_s), \qquad s\ge 0,
\]
is optimal and $v$ coincides with the value function $V$. 

In particular, a solution to \eqref{closedloopellittica} exists if, for every $i,j\in S$, the functions
\begin{align}
    x\mapsto 
    q(\widehat{a}(x),j,i),
    \qquad
x\mapsto  h(i,\widehat{a}(x)),
\end{align}
are locally Lipschitz on $\overset{\circ}{D}$.
\end{Theorem}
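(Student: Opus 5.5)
The plan follows the proof of Theorem \ref{verifparab}, with the discount factor inserted and the horizon sent to infinity. Fix $x\in\overset{\circ}{D}$ and $\alpha\in\cala$, and let $\rho$ be the solution of \eqref{pfHinf}. Write \eqref{HJBellittca} as $\beta v=\sup_a[\call^a v+\sum_i x_i f(i,a)]$, where $\call^a$ is the time-independent Kolmogorov operator. Use the stopping times $T_k$ of \eqref{stopintD}, now with starting time $0$. Apply Itô's formula to $e^{-\beta s}v(\rho_s)$ on $[0,T\wedge T_k]$ for fixed $T>0$. On $[0,T\wedge T_k]$ the process stays in a compact subset of $\overset{\circ}{D}$, so $Dv(\rho_s)$ is bounded there and the stochastic integral has zero expectation. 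Adding and subtracting the running reward gives
\begin{align*}
v(x)&=\E\int_0^{T\wedge T_k}e^{-\beta s}\Big\{\beta v(\rho_s)-\call^{\alpha_s}v(\rho_s)-\sum_i\rho^i_s f(i,\alpha_s)\Big\}ds
\\&\quad+\E\big[e^{-\beta (T\wedge T_k)}v(\rho_{T\wedge T_k})\big]+\E\int_0^{T\wedge T_k}e^{-\beta s}\sum_i\rho_s^if(i,\alpha_s)\,ds .
\end{align*}
By the equation, the term in braces is nonnegative.

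Next let $k\to\infty$ with $T$ fixed. By the strict positivity in Proposition \ref{investimeV}-\emph{1}, $T\wedge T_k=T$ for all large $k$, almost surely. The first term converges by monotone convergence. The last term converges by dominated convergence, since $f$ is bounded and $\E\sup_{s\le T}|\rho_s|<\infty$. The middle term needs justification, because $v$ is only $C^2$ on the open set and no growth condition is assumed. I would first try to show uniform integrability of $\{v(\rho_{T\wedge T_k})\}_k$. If that is not available, I would instead use the stopped identity itself: it gives $v(x)\ge \E[e^{-\beta(T\wedge T_k)}v(\rho_{T\wedge T_k})]+\E\int_0^{T\wedge T_k}\ldots$, which is enough for the inequality $v\ge V$ once Fatou's lemma applies in the right direction. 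Fatou needs a lower bound on $v$. Since $J_\infty$ is bounded by \eqref{Vbdd}, the natural assumption to look for is that $v$ is bounded below, and this is where I expect the main technical obstacle; I would handle it exactly as the parabolic verification theorem handled its boundary term. After the $k$-limit we have
$$
v(x)\ge e^{-\beta T}\E[v(\rho_T)]+\E\int_0^Te^{-\beta s}\sum_i\rho^i_sf(i,\alpha_s)\,ds .
$$
Now let $T\to\infty$. The first term vanishes by the transversality hypothesis $\lim_{T\to\infty}e^{-\beta T}\E[v(\rho_T)]=0$. The second tends to $J_\infty(x,\alpha)$ by dominated convergence, because $\E\sum_i\rho_s^i=\bar\P$-mass equals $\sum_i x_i$ (so the integrand is bounded by $\sup|f|\,|x|_1 e^{-\beta s}$). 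Hence $v(x)\ge J_\infty(x,\alpha)$ for every $\alpha$, and so $v\ge V$.

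For optimality, take $\widehat\alpha_s=\widehat a(\widehat\rho_s)$. This control is $\F^W$-predictable because $\widehat\rho$ is continuous and adapted and $\widehat a$ is measurable, so $\widehat\alpha\in\cala$. Along $\widehat\rho$ the braces vanish identically, so the same identity holds with equalities before the limits. Passing to the limits as above yields $v(x)=J_\infty(x,\widehat\alpha)\le V(x)$. Therefore $v=V$ on $\overset{\circ}{D}$ and $\widehat\alpha$ is optimal.

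For the final claim, under local Lipschitz continuity the coefficients of \eqref{closedloopellittica} are locally Lipschitz on $\overset{\circ}{D}$ and have linear growth. Hence there is a unique strong solution up to each $T_k$, obtained by localized Picard iteration. The argument of Proposition \ref{investimeV}-\emph{1} applies pathwise to this solution viewed as a controlled solution with control $\widehat a(\widehat\rho_s)$, which shows it stays positive and bounded on compact time intervals. So $T_k\to\infty$ almost surely, and the solution is global.
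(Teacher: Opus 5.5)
Your outline is the same as the paper's sketch: Itô's formula for $e^{-\beta s}v(\rho_s)$ on $[0,T\wedge T_k]$, strict positivity from Proposition \ref{investimeV} to get $T_k>T$ eventually, then $T\to\infty$ via the transversality condition. However, the step you leave open is a genuine gap.

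\textbf{The missing limit.} Letting $k\to\infty$ in $\E[e^{-\beta(T\wedge T_k)}v(\rho_{T\wedge T_k})]$ requires control of the stopped family $\{v(\rho_{T\wedge T_k})\}_k$, and the hypotheses give none. The function $v$ is only $C^2$ on the open set $\overset{\circ}{D}$, with no growth or boundedness assumption. The transversality condition concerns $v(\rho_T)$ at deterministic times. It therefore says nothing about whether the local martingale $e^{-\beta t}v(\rho_t)+\int_0^t e^{-\beta s}(\{\cdots\}+\sum_i\rho^i_sf(i,\alpha_s))\,ds$ is a true martingale on $[0,T]$.

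You also cannot ``handle it exactly as the parabolic verification theorem''. There this term is passed to the limit by dominated convergence precisely because Theorem \ref{verifparab} assumes polynomial growth of $v$. Combined with $\E\sup_{s\le T}|\rho_s|^m<\infty$, that assumption gives an integrable bound, and it is absent here.

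\textbf{The Fatou fallback.} It fails on two counts. First, a lower bound on $v$ is not a hypothesis. Second, even granting one, Fatou yields only $\liminf_k\E[e^{-\beta(T\wedge T_k)}v(\rho_{T\wedge T_k})]\ge e^{-\beta T}\E[v(\rho_T)]$. That suffices for $v\ge V$, but not for optimality of $\widehat\alpha$. To get $v(x)\le J_\infty(x,\widehat\alpha)$ you need the opposite inequality $\limsup_k\E[e^{-\beta(T\wedge T_k)}v(\widehat\rho_{T\wedge T_k})]\le e^{-\beta T}\E[v(\widehat\rho_T)]$. So ``passing to the limits as above'' does not produce $v(x)=J_\infty(x,\widehat\alpha)$; along $\widehat\rho$ the Fatou route just reproduces $v(x)\ge J_\infty(x,\widehat\alpha)$.

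\textbf{How to close it.} The paper's sketch takes this limit without comment, so the difficulty is not resolved there either. The natural repair is an extra hypothesis of the kind used in Theorem \ref{verifparab}, for instance polynomial growth of $v$ on $\overset{\circ}{D}$. Alternatively, assume boundedness of $v$; this is natural because $V$ is bounded by \eqref{Vbdd}, and it makes the transversality condition automatic. Under either assumption, $|v(\rho_{T\wedge T_k})|\le C(1+\sup_{s\le T}|\rho_s|^m)$ is an integrable dominating function. Dominated convergence then gives the limit in both directions. The rest of your argument goes through as written: monotone convergence for the nonnegative braces term, the identity $\E\sum_i\rho^i_s=\sum_ix_i$ for the reward, the transversality limit, and predictability of $\widehat\alpha$.
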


\noindent{\bf Proof.} We only sketch the arguments, which are similar to the previous ones. Let $\rho$ denote the trajectory corresponding to an arbitrary control $\alpha$ and starting point $x\in \overset{\circ}{D}$. By Proposition \ref{investimeV}-{\it 1}, $\rho$  never hits the boundary of $D$, a.s. Applying the Ito formula to $e^{-\beta s}v(\rho_s)$ on $[0,T\wedge T_k]$, taking expectation and letting $k\to\infty$ and $T\to\infty$  we obtain
\begin{align*}
v(x)&= \E\int_0^{\infty}e^{-\beta s} \Big\{\beta v(\rho_s)-\call^{\alpha_s}v(\rho_s)-\sum_{i}
\rho_s^i\,f(i,\alpha_s)\Big\}\,ds +J_\infty(x,\alpha).
\end{align*}
As before, the term in curly brackets  is nonnegative and it is zero when $\alpha=\hat\alpha$. The conclusion follows.
\qed

\begin{Example} \emph{
Consider the case when $A\subset\R$ is an interval $[0,R]$ for some $R>0$. Take
\[
q(a,t,i,j)=a, \quad h(i,a,t)=h(i), \quad
f(i,a,t)=-\frac{a^2}{2}
\]
for $a\in[0,R]$, $t\in [0,T]$, $i,j\in S$. Thus, we are considering a control problem for a Markov chain $X$ with controlled transition rates that can take any value in $[0,R]$ and reward functional and observation process given by
$$
J(\alpha)=\bar\E\left[ -\frac{1}{2}\int_0^T 
\alpha_t^2\,dt   + 
g(X_T)\right],
\qquad
\int_0^th(X_s)\,ds+B_t,
$$
for arbitrary $g:S\to\R$.
Setting
$\gamma_{ij}= \sum_{k=1}^dh_k(i)h_k(j)$, the HJB equation \eqref{HJBparabolica} becomes 
\[
 v_t(t,x) +\frac{1}{2}\sum_{ij}\partial_{ij}^2v(t,x )\,
x_ix_j \gamma_{ij}
  +\sup_{a\in [0,R]}\bigg[a\,
\sum_{ij} \partial_iv(t,x)x_j
-\frac{a^2}{2}
\,\sum_i
x_i 
\bigg]     =0,
\]
with the boundary condition $
 v(T,x)= \sum_i
x_i\,g(i)$. Setting
\[\widehat a(p):=
\underset{a\in [0,R]}{\mathrm{arg\,max}}
\left[p\, a-\frac{a^2}{2}\right]=p^+\wedge R, \qquad p\in\R,
\]
the equation becomes
\begin{equation*}
 v_t(t,x) +\frac{1}{2}\sum_{ij}\partial_{ij}^2v(t,x )\,
x_ix_j \gamma_{ij}
  + \widehat a\left( 
\frac{\sum_{ij} \partial_iv(t,x)x_j}
{ \sum_i
x_i} 
\right)\sum_i
x_i     =0.
\end{equation*}
Assume that a solution $v\in C^{1,2}([0,T]\times \overset{\circ}{D})$ exists, with polynomial growth in $x$ uniformly in $t$. Then Theorem \ref{verifparab} applies and we conclude that the closed-loop equation
\[
\left\{
\begin{array}{rclr}
d\widehat\rho_s^i&=&\displaystyle \widehat a\left( 
\frac{\sum_{\ell j} \partial_\ell v(s,\widehat \rho_s)\widehat \rho_s^j}
{ \sum_\ell
\widehat\rho_s^\ell} 
\right) \sum_j \widehat\rho_s^j \,ds
+ \widehat\rho_s^i\,h(i)\,dW_s,
& s\in[ t,T],\, i\in S,
\\
\widehat\rho_t^i&=&x_i,&
\end{array}\right.
\]
has an $\F^W$-adapted continuous solution  $\widehat\rho$, the control process in feedback form
\[
\widehat{\alpha}_s=\widehat a\left( 
\frac{\sum_{\ell j} \partial_\ell v(s,\widehat \rho_s)\widehat \rho_s^j}
{ \sum_\ell
\widehat\rho_s^\ell} 
\right), \qquad s\in [t,T],
\]
is optimal and $v$ coincides with the value function.  \qed
}
\end{Example}

\begin{Remark}\label{equnifellitt} 
\emph{ 
Define  $\gamma_{ij}(a)= \sum_{k=1}^dh_k(i,a)h_k(j,a)$ and write the equation  \eqref{HJBellittca} in the form
\begin{equation}
\label{HJBellittcaprovv}
\beta v(x)   -\sup_{a\in A}\bigg[
\frac{1}{2}\sum_{ij}\partial_{ij}^2v(x )\,x_ix_j\,\gamma_{ij}(a)
+\sum_{ij} \partial_iv(x)x_jq(a,j,i)
+ 
\sum_i
x_i\,f(i,a)  
\bigg]     =0.
\end{equation}
Assume in addition that the functions $h(i, \cdot):A \rightarrow \mathbb{R}^d$, $q(\cdot,i,j):A \rightarrow [0,\infty)$ and $f(i, \cdot) : A \rightarrow \mathbb{R}$ are continuous for every $i,j \in S$ and the following ellipticity condition holds: there exists $\kappa>0$ such that
\[
\sum_{i,j}\gamma_{ij}(a)\xi_i\xi_j\ge \kappa\,|\xi|^2, \qquad \xi\in\R^N,\, a\in A.
\]
Note that this may happen only provided $N\le d$.
Then one may prove that the solution $v$ is in fact of class $C^2(\overset{\circ}{D})$ with H\"older continuous second derivatives and it satisfies the equation in the classical sense. 
This follows from a result in \cite{Saf88}, established for bounded smooth domains in
$\mathbb{R}^n$ and thus applies to any smooth domain compactly contained in $\overset{\circ}{D}$. (In this reference the supremum is taken over a countable family; the extension to our setting is a direct consequence of the continuity of the coefficients with respect to 
$a$ and the fact that the control action space $A$ is Polish): The same result can be achieved as in \cite{GoVa02} by a logarithmic change of variables $y_i=\log x_i$
introducing the 
auxiliary unknown function
$$
w(y_1,\ldots,y_N)=v(e^{y_1} ,\ldots,e^{y_N}),
\qquad y=(y_1,\ldots,y_N)\in\R^N,
$$
which is defined on $\R^N$. 
Similarly, in the parabolic case, the equation \eqref{HJBparabolica} can be written as
\begin{equation}
\label{HJBparabolica_2}
    - v_t(t,x)   -\sup_{a\in A}\bigg[
\frac{1}{2}\sum_{ij}\partial_{ij}^2v(t,x )\,
x_ix_j\gamma_{ij}(a,t)
+\sum_{ij} \partial_iv(t,x)x_jq(a,t,j,i)
+ 
\sum_i
x_i\,f(i,a,t) 
\bigg]     =0,
\end{equation}
where $\gamma_{ij}(a,t)= \sum_{k=1}^dh_k(i,a,t)h_k(j,a,t)$.
Assume that the functions $h(i, \cdot,t):A \rightarrow \mathbb{R}^d$, $q(\cdot,t,i,j):A \rightarrow [0,\infty)$ and $f(i, \cdot) : A \rightarrow \mathbb{R}$ are continuous for every $i,j \in S$, $ t\in [0,T]$ and the following   ellipticity condition holds: there exists $\kappa>0$ such that
\[
\sum_{i,j}\gamma_{ij}(a,t)\xi_i\xi_j\ge \kappa\,|\xi|^2, \qquad \xi\in\R^N,\, a\in A, \, t \in [0,T].
\]
One may prove again that the solution $v$ is of class $C^{1,2}([0,T]\times \overset{\circ}{D})$ with  H\"older continuous derivatives. This follows from \cite{Wang90}, Theorem 1.1, see also \cite{CrKoSw00} (comments before Theorem 9.1).   \qed
}
\end{Remark}

\section{Stochastic maximum principle}
\label{secSMP}

We devote this  final section to formulate a stochastic maximum principle for the separated problem as a necessary condition for optimality. Although the proof partially relies on known results, the formulation improves existing results in the literature, especially because we remove the restriction that the set of control  actions $A$ should be convex and the coefficients be differentiable with respect to $a\in A$. 

In this section we assume that Assumptions \ref{settingprobabgen} and \ref{ipcoeffpb} hold, and for simplicity we only treat the finite horizon case starting at time $0$, namely:
$$
d\rho_t^i= \sum_j \rho_t^j q(\alpha_t,t,j,i)\,dt
+ \rho_t^i\,h(i,\alpha_t,t)\,dW_t,\qquad \rho_0^i=x_i,
\qquad t\in[0,T],\, i\in S,
$$
$$
J(\alpha)=\E\left[ \int_0^T  \sum_i
\rho_t^i\,f(i,\alpha_t,t)\,dt   + \sum_i\rho_T^i\,g(i)
\right].
$$

It is convenient to write it in vector form. Let us recall the definition of the matrix $Q^a$ and let us define the $N$-dimensional vectors
$$
\rho=(\rho^i)_i,
\quad
h_k(a,t)=(h_k(i,a,t))_i,
\quad
f(a,t)= (f(i,a,t) )_i,
\quad
g=(g(i))_i
$$
for $a\in A,$ $t\ge0$, $k=1,\ldots,d$.
We also denote the componentwise multiplication of vectors as follows: $$x*y= (x(i)y(i))_i, \qquad \hbox{for}\quad x=(x(i))_i,\, y=(y(i))_i\in\R^N.
$$
With this notation we write
$$
d\rho_t= (Q^{\alpha_t}_t)^{\rm T} \rho_t \,dt
+ \sum_{k=1}^d\rho_t *  h_k(\alpha_t,t)\,dW^k_t,
\qquad
J(\alpha)=\E\left[ \int_0^T  \langle
\rho_t,f(\alpha_t,t)\rangle\,dt + \langle\rho_T,g\rangle
\right],
$$
where $\langle\cdot,\cdot\rangle$ stands for the scalar product in $\R^N$.

For an arbitrary admissible control $(\alpha_t)$   we consider the adjoint BSDE
\begin{equation}
    \label{adjBSDE}
    \left\{
    \begin{array}{rcl}  
       -dp_t&=  & \displaystyle
       -  \sum_{k=1}^d   q^k _t\,dW^k_t+ \Big(
Q_t^{\alpha_t}p_t+ \sum_{k=1}^d h_k(\alpha_t,t)*  q^k _t + f (\alpha_t,t)
\Big)\,dt,
\\
       p_T&=  & g .
    \end{array}
    \right.
\end{equation}
In our specific case the controlled trajectory $(\rho_t)$ does not occur in the BSDE. 
The solution is understood in the usual way: the process $p$ is continuous adapted, the processes $q^1,\ldots,q^d$ are progressive,  and
$$\E\left[\sup_{t\in [0,T]}|p_t|^2+\sum_{k=1}^d\int_0^T|q^k_t|^2\right]<\infty.
$$
Within this class there exists a solution $(p,q^k)$, the process $p$ is unique up to indistinguishability and the processes $q^k$ up to equality $d\P\otimes dt$-a.s. This follows from standard results on BSDEs and our boundedness assumptions on $q(a,t,i,j)$, $h_k(i,a,t)$, $f(i,a,t)$. 

\begin{Theorem}\label{thSMP} Suppose that the coefficients $q$, $h_k$, $f$ are continuous functions of $a\in A$, for fixed $t,i,j$.  
Assume that $(\alpha_t)$ is an optimal control. 
    Let $(\rho_t)$ be the corresponding trajectory  and $(p_t,q^k_t)$ the solution to the adjoint BSDE. Defining the Hamiltonian
    $$
    H(t,\rho,a,p,q^1,\ldots, q^d)=
     f (a,t) + 
    \langle Q_t^a p,\rho\rangle +
\sum_{k=1}^d
    \langle q^k,h_k(a,t)*\rho\rangle,
    \qquad a\in A;\;  \;\rho,p,q^k\in\R^N
    $$
    we have, $d\P\otimes dt$-a.s.,
$$H(t,\rho_t,\alpha_t,p_t,q^1_t,\ldots, q^d_t)=
    \max_{a\in A} H(t,\rho_t,a,p_t,q^1_t,\ldots, q^d_t).
$$
\end{Theorem}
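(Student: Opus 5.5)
Since the state equation is linear in $\rho$ and the reward is linear in $\rho$, I would prove the result with Peng's spike variation, using the linearity to obtain an \emph{exact} first-order expansion with no second-order term. Note that $f(a,t)$ in the Hamiltonian should be read as $\langle f(a,t),\rho\rangle$, consistent with the reward. Fix $\tau\in[0,T)$, $\varepsilon>0$ and an $\calf^W_\tau$-measurable $A$-valued random variable $v$ (the admissible class is $\F^W$-predictable, so $v1_{(\tau,\tau+\varepsilon]}$ is fine). Define the spike control $\alpha^\varepsilon_t=v$ on $(\tau,\tau+\varepsilon]$ and $\alpha^\varepsilon_t=\alpha_t$ elsewhere, and let $\rho^\varepsilon$ be the corresponding trajectory. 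By optimality, $J(\alpha^\varepsilon)-J(\alpha)\le 0$.

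The key identity comes from a duality computation. Set $\eta=\rho^\varepsilon-\rho$. Then $\eta$ solves the linear equation
$$d\eta_t=(Q^{\alpha_t}_t)^{\rm T}\eta_t\,dt+\sum_k\eta_t*h_k(\alpha_t,t)\,dW^k_t+\delta b_t\,dt+\sum_k\delta\sigma^k_t\,dW^k_t,\qquad \eta_0=0,$$
where $\delta b_t=(Q^{\alpha^\varepsilon_t}_t-Q^{\alpha_t}_t)^{\rm T}\rho^\varepsilon_t$ and $\delta\sigma^k_t=\rho^\varepsilon_t*(h_k(\alpha^\varepsilon_t,t)-h_k(\alpha_t,t))$, both supported on $(\tau,\tau+\varepsilon]$. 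I would apply Itô's formula to $\langle p_t,\eta_t\rangle$, using the BSDE \eqref{adjBSDE}, and justify that the local martingale terms vanish in expectation using the square-integrability of $(p,q)$ and the moment bounds on $\rho,\rho^\varepsilon$. This gives
$$J(\alpha^\varepsilon)-J(\alpha)=\E\int_0^T\big[H(t,\rho^\varepsilon_t,\alpha^\varepsilon_t,p_t,q_t)-H(t,\rho^\varepsilon_t,\alpha_t,p_t,q_t)\big]dt,$$
with no remainder, because every term is linear in $\rho$. This is the simplification that removes Peng's second adjoint equation.

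Next I would replace $\rho^\varepsilon$ by $\rho$. Standard estimates give $\E\sup_t|\rho^\varepsilon_t-\rho_t|^2\le C\varepsilon$, since the perturbation acts on an interval of length $\varepsilon$ with bounded coefficients. The integrand is bounded by $C|\rho^\varepsilon_t-\rho_t|(1+|p_t|+\sum_k|q^k_t|)$ on $(\tau,\tau+\varepsilon]$, so by Cauchy–Schwarz the error is $o(\varepsilon)$. Dividing by $\varepsilon$ yields
$$\frac1\varepsilon\E\int_\tau^{\tau+\varepsilon}\big[H(t,\rho_t,v,p_t,q_t)-H(t,\rho_t,\alpha_t,p_t,q_t)\big]dt\le o(1).$$

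The main obstacle is passing to the limit in $\varepsilon$ and then removing the dependence on $v$, since $q^k$ is only progressive and $H$ is not continuous in $t$. My plan is the following.
\begin{enumerate}
\item Apply the Lebesgue differentiation theorem to the $L^1$-valued map $t\mapsto \E[\,\cdot\,1_G]$. More precisely, take $v=a$ constant and multiply by $1_G$ for $G\in\calf^W_\tau$. Choosing $v=a$ on $G$ and $v=\alpha$-like elsewhere is not admissible, so instead I use $v=a1_G+a_01_{G^c}$ and compare. For a.e.\ $\tau$ this gives $\E[(H(\tau,\rho_\tau,a,\cdot)-H(\tau,\rho_\tau,\alpha_\tau,\cdot))1_G]\le0$.
\item Restrict $\tau$ and $G$ to countable families: rational $\tau$-independent generators, a countable dense set of $a\in A$ (possible since $A$ is Polish), and a countable algebra generating $\calf^W_\tau$. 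Conditioning on $\calf^W_\tau$ then gives the inequality $d\P\otimes dt$-a.s. I expect the differentiation to handle only $\tau\mapsto$ integrand and not $G$ varying with $\tau$; I would handle this by taking the conditional expectation $\E[\,\cdot\mid\calf^W_\tau]$ of a progressive integrand, noting it is an a.s.\ limit.
\item Use continuity of $q,h_k,f$ in $a$ to extend the inequality from the dense countable set to all of $A$. This gives $H(t,\rho_t,\alpha_t,\cdot)\ge H(t,\rho_t,a,\cdot)$ for all $a$, $d\P\otimes dt$-a.s., which is the claim.
\end{enumerate}
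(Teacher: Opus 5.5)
Your first-order expansion is correct, and it is more self-contained than the paper's appeal to formula (4.59) of Yong--Zhou. It\^o's formula for $\langle p_t,\eta_t\rangle$ gives the exact identity $J(\alpha^\varepsilon)-J(\alpha)=\E\int_0^T[H(t,\rho^\varepsilon_t,\alpha^\varepsilon_t,p_t,q_t)-H(t,\rho^\varepsilon_t,\alpha_t,p_t,q_t)]\,dt$, with $\langle f(a,t),\rho\rangle$ in $H$ as you rightly point out. Replacing $\rho^\varepsilon$ by $\rho$ then costs only $o(\varepsilon)$.

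The gap is in the localization step, and it has three parts.

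\emph{(i)} Your claim that the control equal to $a$ on $G\times(\tau,\tau+\varepsilon]$ and to $\alpha$ elsewhere is not admissible is false. For $G\in\calf^W_\tau$ this set is a predictable rectangle, so the control is $\F^W$-predictable; your own remark on $v1_{(\tau,\tau+\varepsilon]}$ already shows this.

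\emph{(ii)} The substitute $v=a1_G+a_01_{G^c}$ does not work. Write $\Delta_t(b)=H(t,\rho_t,b,p_t,q_t)-H(t,\rho_t,\alpha_t,p_t,q_t)$. The substitute only yields $\E[\Delta_\tau(a)1_G+\Delta_\tau(a_0)1_{G^c}]\le0$. No comparison of inequalities coming from different variations removes the $G^c$ term, so you never reach $\E[\Delta_\tau(a)1_G]\le0$.

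\emph{(iii)} Even with the correct spike, the set of non-Lebesgue points of $t\mapsto\E[\Delta_t(a)1_G]$ depends on $(a,G)$. Yet $G$ must lie in $\calf^W_\tau$ for the very $\tau$ at which you differentiate. Your item 2 names this difficulty, but the remark about conditional expectations being a.s. limits does not resolve it. The standard repair is as follows.
\begin{itemize}
\item Take $G$ from an increasing family of countable algebras $\mathcal C_r$, $r\in\mathbb Q$, with $\sigma(\mathcal C_r)=\calf^W_r$.
\item Use $G\in\mathcal C_r$ only at Lebesgue points $\tau>r$.
\item Pass to all $G\in\calf^W_\tau$ by a monotone class argument, using $\calf^W_{\tau-}=\calf^W_\tau$ (or a predictable version of $q$).
\end{itemize}
This gives $\Delta_\tau(a)\le0$ a.s. for a.e. $\tau$ and all $a$ in a countable dense subset of $A$, after which your item 3 finishes.

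A shorter repair is the paper's route, which your estimates already support. Your identity and the $o(\varepsilon)$ bound hold verbatim in two generalizations. The interval $(\tau,\tau+\varepsilon]$ can be any Borel set $I_\varepsilon$ with $|I_\varepsilon|=\varepsilon$, and the spike value can be any $\bar\alpha\in\cala$. The bound is uniform in $I_\varepsilon$, since it involves only $|I_\varepsilon|$ and $\int_{I_\varepsilon}\E(1+|p_t|+\sum_k|q^k_t|)^2dt$.

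Set $\ell(t)=\E[H(t,\rho_t,\bar\alpha_t,p_t,q_t)-H(t,\rho_t,\alpha_t,p_t,q_t)]$. Choose $I_\varepsilon$ by the Li--Tang lemma so that $\int_{I_\varepsilon}\ell\,dt$ is within $\varepsilon^2$ of $\frac{\varepsilon}{T}\int_0^T\ell\,dt$. This yields $\int_0^T\ell\,dt\le0$ for every admissible $\bar\alpha$. Then take $\bar\alpha=a$ on $B_a=\{\Delta_t(a)>0\}$ and $\alpha$ off it; $B_a$ is predictable once a predictable version of $q$ is used. This shows $B_a$ is $d\P\otimes dt$-null, and it avoids both Lebesgue differentiation and countable generators of the filtration.
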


{\bf Proof.} As explained before, this result is essentially an application of the general stochastic maximum principle in \cite{Pe1990} (see also \cite{YoZhbook} for a careful exposition). Our sketch of proof is simply intented to give the reader exact indications for all details and  warn about the minor changes required in our case.

Take an arbitrary admissible control $(\bar \alpha_t)$. For any $\epsilon \in(0,T]$ and any Borel set $I_\epsilon\subset [0,T]$ with Lebesgue measure   $|I_\epsilon|=\epsilon$,  define the spike variation control setting
$$
\alpha^\epsilon_t=\left\{ 
\begin{array}{ll}
\alpha_t,& t\in {I_\epsilon },
\\
\bar\alpha_t,& t\in [0,T]\backslash {I_\epsilon }.
\end{array}
\right.
$$
Since $\alpha$ is optimal we have $J(\alpha^\epsilon)\le J(\alpha)$. Proceeding   as in \cite{Pe1990} one arrives at
\begin{equation}
    \label{459}
0\ge J(\alpha^\epsilon)-J(\alpha)
= \E\left[
\int_0^T
\Big(H(t,\rho_t,\alpha_t^\epsilon,p_t,q^1_t,\ldots, q^d_t) -
H(t,\rho_t,\alpha_t,p_t,q^1_t,\ldots, q^d_t)
\Big)\,dt 
\right] +o(\epsilon).
\end{equation}
This follows immediately from
formula (4.59) in Section 5.4 of \cite{YoZhbook}, and
the reader may find a detailed proof there. In fact, since $H$ and the terminal reward $\langle g,\rho\rangle$ are linear functions of $\rho$,  their second derivatives with respect to $\rho$ vanish and the 
formula (4.59) in \cite{YoZhbook} reduces to \eqref{459}.

We describe in some detail how the conclusion follows from \eqref{459}, a point which is often neglected in several papers. We follow the elegant approach of \cite{LiTang94}, which    is based on the following result (compare Lemma 2.2 of \cite{LiTang94}) and avoids using the Lebesgue differentiation theorem.

\begin{Lemma}
Let $\ell:[0,T]\to\R$ be Borel measurable and satifying $\|\ell\|_{L^1}:=\int_0^T|\ell(t)|\,dt<\infty$. Then for any $\epsilon \in(0,T]$ there exists a Borel set $I_\epsilon\subset [0,T]$ with   $|I_\epsilon|=\epsilon$ and such that 
\[
\left|\frac{\epsilon}{T} \int_0^T\ell(t)\,dt-\int_{I_\epsilon}\ell(t)\,dt
\right|\le \epsilon^2.
\]
\end{Lemma}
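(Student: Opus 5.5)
The approach is to consider the function $\phi(s)=\int_0^s \ell(t)\,dt - \frac{s}{T}\int_0^T \ell(t)\,dt$ and to take $I_\epsilon$ as a finite union of intervals, chosen by a discretization argument. With a single interval $[s,s+\epsilon]$ the quantity $\int_s^{s+\epsilon}\ell\,dt - \frac{\epsilon}{T}\int_0^T\ell\,dt$ cannot in general be made small, so I will use many short intervals of equal length, spread uniformly over $[0,T]$.

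Fix a large integer $m$ and split $[0,T]$ into $m$ consecutive intervals $J_k=[t_{k-1},t_k)$ of length $T/m$. For a shift $s\in[0,T/m]$, let $I(s)=\bigcup_{k}[t_{k-1}+s',\,t_{k-1}+s'+\epsilon/m)$, where the inner intervals have length $\epsilon/m\le T/m$. Arguing cyclically inside each $J_k$, we obtain $|I(s)|=\epsilon$. The plan is to average over $s$: Fubini gives
\[
\frac{m}{T}\int_0^{T/m}\Big(\int_{I(s)}\ell\,dt\Big)ds=\frac{\epsilon}{T}\int_0^T\ell\,dt ,
\]
because each point $t$ lies in $I(s)$ for a set of shifts of measure $\epsilon/m$. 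This does not yet give an estimate of order $\epsilon^2$.

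For the sharper bound I will instead use absolute continuity of the integral, via an approximation. Choose a continuous (or step) function $\tilde\ell$ with $\|\ell-\tilde\ell\|_{L^1}\le\epsilon^2/3$. For step functions that are constant on the $J_k$ (taking $m$ to be a common refinement), the set $I$ obtained by putting in each $J_k$ an interval of length $\epsilon/m$ satisfies $\int_I\tilde\ell=\frac{\epsilon}{T}\int_0^T\tilde\ell$ exactly. Then
\[
\Big|\int_I\ell-\frac{\epsilon}{T}\int\ell\Big|\le\int_I|\ell-\tilde\ell|+\frac{\epsilon}{T}\|\ell-\tilde\ell\|_{L^1}\le 2\|\ell-\tilde\ell\|_{L^1}\le\epsilon^2 ,
\]
using $\epsilon\le T$.

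The only real step is density of step functions in $L^1(0,T)$, which is standard. The remaining step is to check that step functions can be taken constant on a uniform partition: I will approximate by a continuous function first and then use uniform continuity to obtain a uniform-mesh step function. I expect this step to be routine, so no serious obstacle is anticipated.
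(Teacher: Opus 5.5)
Your argument is correct and is essentially the paper's proof. Both approximate $\ell$ in $L^1$ to within a multiple of $\epsilon^2$ by a function for which a set of measure $\epsilon$ can be chosen that meets each piece of constancy in exactly the fraction $\epsilon/T$ of its measure, and both conclude by the triangle inequality using $\epsilon\le T$. The only difference is that the paper uses a simple function on a Borel partition $\{E^i\}$ and the non-atomicity of Lebesgue measure to pick $E^i_\epsilon\subset E^i$ with $|E^i_\epsilon|=\epsilon|E^i|/T$. That avoids your detour through continuous functions and uniform meshes, and your opening averaging paragraph is not needed either.
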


\noindent {\bf Proof.}
We present a self-contained and simplified proof of a more general result that can be found in Theorem 2 in \cite{LiYao85}. 
Take a finite-valued function $\bar\ell$ such that $\|\ell-\bar\ell\|_{L^1}\le \epsilon^2/2$. Write $\bar\ell$ in the form $\sum_{i=1}^n\ell^i\,1_{E^i}$ for $\ell^i\in\R$ and a finite partition $\{E^i\}$ of   $[0,T]$ consisting of Borel sets. Since the Lebesgue measure is non-atomic, there exist Borel sets $E^i_\epsilon\subset E^i$ such that $|E^i_\epsilon|=\epsilon |E^i|/T$. Then we have
\[
\frac{\epsilon}{T} \int_0^T\bar\ell(t)\,dt=
\sum_{i=1}^n\ell^i\,\epsilon\,|E^i|/T= \sum_{i=1}^n\ell^i\,\,|E^i_\epsilon|=\int_{I_\epsilon}\bar\ell(t)\,dt
,
\]
provided we set $I_\epsilon=\cup_{i=1}^nE^i_\epsilon$. We have $|I_\epsilon|=\sum_{i=1}^n|E^i_\epsilon|=\sum_{i=1}^n\epsilon|E^i|/T=\epsilon$ and
\begin{align*}
\left|\frac{\epsilon}{T} \int_0^T\ell(t)\,dt-\int_{I_\epsilon}\ell(t)\,dt
\right|
&=
\left|\frac{\epsilon}{T} \int_0^T\ell(t)\,dt-\frac{\epsilon}{T} \int_0^T\bar \ell(t)\,dt+\int_{I_\epsilon}\bar\ell(t)\,dt-\int_{I_\epsilon}\ell(t)\,dt
\right| 
\\&\quad 
\le \frac{\epsilon}{T}\|\ell-\bar\ell\|_{L^1} +\|\ell-\bar\ell\|_{L^1}\le \epsilon^2 .
\qed
\end{align*}

We conclude the proof of Theorem \ref{thSMP}. Apply the previous lemma to the function 
\[\ell(t)=\E\left[
H(t,\rho_t,\bar\alpha_t,p_t,q^1_t,\ldots, q^d_t) -
H(t,\rho_t,\alpha_t,p_t,q^1_t,\ldots, q^d_t) 
\right]
\]
and choose the set $I_\epsilon$ accordingly. Then
\eqref{459} yields
$\int_{I_\epsilon}\ell(t)\,dt\le o(\epsilon)$. By the lemma we also have $\frac{\epsilon}{T}\int_0^T\ell(t)\,dt\le o(\epsilon)$ and we conclude that
\begin{equation}
\label{Banegl}
\int_0^T\ell(t)\,dt
= \E\left[
\int_0^T
\Big(H(t,\rho_t,\bar\alpha_t,p_t,q^1_t,\ldots, q^d_t) -
H(t,\rho_t,\alpha_t,p_t,q^1_t,\ldots, q^d_t)
\Big)\,dt 
\right] \le 0
\end{equation}
for an arbitrary admissible control $\bar\alpha$.

Given any $a\in A$, let
\[
B_a=\{(\omega,t)\in\Omega\times [0,T]: 
H(t,\rho_t(\omega),a,p_t(\omega),q^1_t(\omega),\ldots, q^d_t(\omega)) >
H(t,\rho_t(\omega),\alpha_t(\omega),p_t(\omega),q^1_t(\omega),\ldots, q^d_t(\omega))\}.
\]
Choosing
$\bar\alpha_t(\omega)=a\,1_{B_a}(\omega,t)+\alpha_t(\omega)\,1_{(\Omega\times [0,T])\backslash B_a}(\omega,t)$ in  \eqref{Banegl} it follows that $B_a$ is $d\P\otimes dt$-negligible. 
In other words, for every $a\in A$,
$$H(t,\rho_t,\alpha_t,p_t,q^1_t,\ldots, q^d_t)\ge H(t,\rho_t,a,p_t,q^1_t,\ldots, q^d_t), \qquad d\P\otimes dt-a.s.
$$
By choosing a countable dense set of $a$'s in $A$, and using the continuity of the coefficients with respect to $a$, we obtain the required conclusion.
\qed

\bigskip

{\bf Acknowledgements.}  The authors wish to thank Prof. Andrzej \'Swi\c{e}ch for his help and suggestions on viscosity solutions to the dynamic programming equations considered in this paper.

\bibliographystyle{plain}
\bibliography{biblio}

\end{document}